\documentclass[11pt,a4paper,reqno]{amsart}
\usepackage[includehead,includefoot,margin=25mm]{geometry}
\usepackage{times}
\usepackage{amsfonts} %
\usepackage{amsmath} %
\usepackage{amssymb} %
\usepackage{amsthm} %
\usepackage{enumerate} %
\usepackage{bbm} %
\usepackage{graphicx} %
\usepackage{nicefrac} %
\usepackage{color} %
\usepackage{hyperref}
\hypersetup{
    colorlinks=true,       
    linkcolor=blue,          
    citecolor=blue,        
    filecolor=blue,      
    urlcolor=blue           
}

\newtheorem{theorem}{Theorem}[section] %
\newtheorem{corollary}[theorem]{Corollary} %
\newtheorem{lemma}[theorem]{Lemma} %
\newtheorem{proposition}[theorem]{Proposition} %
{\theoremstyle{remark} %
  \newtheorem{remark}[theorem]{Remark}} %
{\theoremstyle{definition} %
  \newtheorem{definition}[theorem]{Definition} %
  \newtheorem{example}[theorem]{Example} %
}

\newcommand{\PP}[0]{\ensuremath{\mathbb{P}}}
\newcommand{\CC}[0]{\ensuremath{\mathbb{C}}}
\newcommand{\ZZ}[0]{\ensuremath{\mathbb{Z}}}

\newcommand{\tvarphi}[0]{\ensuremath{\widetilde{\varphi}}}
\newcommand{\tpsi}[0]{\ensuremath{\widetilde{\psi}}}
\newcommand{\ttau}[0]{\ensuremath{\widetilde{\tau}}}
\newcommand{\tG}[0]{\ensuremath{\widetilde{G}}}

\newcommand{\Aut}[0]{\ensuremath{\operatorname{Aut}}}
\newcommand{\Lin}[0]{\ensuremath{\operatorname{Lin}}}
\newcommand{\diag}[0]{\ensuremath{\operatorname{diag}}}
\newcommand{\PGL}[0]{\ensuremath{\operatorname{PGL}}}
\newcommand{\GL}[0]{\ensuremath{\operatorname{GL}}}
\newcommand{\SL}[0]{\ensuremath{\operatorname{SL}}}
\newcommand{\Id}[0]{\ensuremath{\operatorname{Id}}}

\begin{document}

\title[On the liftability of the automorphism group of smooth hypersurfaces]{On the liftability of the automorphism group of smooth hypersurfaces of the projective space}

\author{V\'\i ctor Gonz\'alez-Aguilera}
\address{Departamento de Matem\'aticas, Universidad T\'ecnica
  Fe\-de\-ri\-co San\-ta Ma\-r\'\i a, Valpara\'\i
  so, Chile}  \email{victor.gonzalez@usm.cl}

\author{Alvaro Liendo} %
\address{Instituto de Matem\'atica y F\'isica, Universidad de Talca,
  Casilla 721, Talca, Chile} %
\email{aliendo@inst-mat.utalca.cl}

\author{Pedro Montero}
\address{Departamento de Matem\'aticas, Universidad T\'ecnica
  Fe\-de\-ri\-co San\-ta Ma\-r\'\i a, Valpara\'\i
  so, Chile}  \email{pedro.montero@usm.cl}

\date{\today}

\thanks{{\it 2010 Mathematics Subject
    Classification}: 14J40, 14J70, 14J50.\\
  \mbox{\hspace{11pt}}{\it Key words}: Smooth hypersurfaces, $n$-folds, Automorphisms group of smooth hypersurfaces, $F$-liftability.\\
  \mbox{\hspace{11pt}} The second author was partially supported by Fondecyt Projects 1160864 and 1200502. The third author was partially supported by Fondecyt Projects 11190323 and 1200502.}

\begin{abstract}
  Let $X$ be a smooth hypersurface of dimension $n\geq 1$ and degree $d\geq 3$ in the projective space given as the zero set of a homogeneous form $F$. If $(n,d)\neq (1,3), (2,4)$ it is well known that every automorphism of $X$ extends to an automorphism of the projective space, i.e., $\operatorname{Aut}(X)\subseteq \operatorname{PGL}(n+2,\mathbb{C})$. We say that the automorphism group $\operatorname{Aut}(X)$ is $F$-liftable if there exists a subgroup of $\operatorname{GL}(n+2,\mathbb{C})$ projecting isomorphically onto $\operatorname{Aut}(X)$ and leaving $F$ invariant. Our main result in this paper shows that the automorphism group of every smooth hypersurface of dimension $n$ and degree $d$ is $F$-liftable if and only if $d$ and $n+2$ are relatively prime. We also provide an effective criterion to compute all the integers which are a power of a prime number and that appear as the order of an automorphism of a smooth hypersurface of dimension $n$ and degree $d$. As an application, we give a sufficient condition under which some Sylow $p$-subgroups of $\operatorname{Aut}(X)$ are trivial or cyclic of order $p$.
\end{abstract}

\maketitle

\section*{Introduction}

Let $n\geq 1$ and $d\geq 1$ be integers.  A hypersurface $X$ of dimension $n$ and degree $d$ of the projective space $\PP^{n+1}:=\PP^{n+1}(\CC)$ is the set of zeros of a homogeneous polynomial $F$ of degree $d$. A smooth hypersurface of dimension $n$ is also called an $n$-fold. Smooth hypersurfaces are classical objects in algebraic geometry since they are the simplest varieties one can define as they are given by only one equation. As such, they have been intensively studied and their geometry has shaped the development of classic and modern algebraic geometry. One noteworthy example is the central role that they play on the rationality problem since the very beginnings of algebraic geometry and specially after the seminal works of Iskovskikh and Manin \cite{IM71}, Artin and Mumford \cite{AM72}, and Clemens and Griffiths \cite{CG72}, concerning the L\"{u}roth problem for threefolds, and more recently after the new methods for studying stably rational varieties, introduced by Voisin in \cite{Voi15} and successfully developed by many other authors, for which we kindly refer the reader to the surveys \cite{Ko19} and \cite{Pey19}. Smooth hypersurfaces furnish a remarkably useful and concrete testing ground for many general theories.

If $d\geq 3$ and $(n,d)\neq (1,3), (2,4)$. A classical result due to Matsumura and Monsky \cite{MM64} states that every automorphism of a hypersurface $X$ is induced by an automorphism of the ambient projective space. Since the automorphism group of $\PP^{n+1}$ is the projective linear group $\PGL(n+2,\CC)$, we have then $\Aut(X)\subset \PGL(n+2,\CC)$. Given a subgroup $G$ of $\Aut(X)$, it is a natural question to ask if there exists a group $\tG\subset \GL(n+2,\CC)$ that projects isomorphically into $G$ via the the natural homomorphism $\pi\colon \GL(n+2,\CC)\rightarrow \PGL(n+2,\CC)$. If this is the case, we say that $G$ is liftable and we say that $\tG$ is a lifting of $G$. Let now $\tvarphi\in \tG$. Since $\pi(\tvarphi)$ is an automorphism of $X$, we have that $F$ is semi-invariant by $\tvarphi$, i.e., $\tvarphi^*(F)=\lambda F$. Furthermore, if we have that the homogeneous polynomial $F$ defining the hypersurface $F$ is invariant by every automorphism on $\tvarphi\in G$, i.e., $\tvarphi^*(F)=F$, then we say that $\tG$ is an $F$-lifting and we say that $G$ is $F$-liftable.

Our main result in this paper, contained in Theorem~\ref{all-liftable}, states that if $n\geq 1$, $d\geq 3$ and $(n,d)\neq (1,3), (2,4)$, then the automorphism group of every smooth hypersurface of dimension $n$ and degree $d$ in $\PP^{n+1}$ is $F$-liftable if and only if $d$ and $n+2$ are relatively prime. Our main inspirations for this work are the recent papers \cite{OY19} and \cite{WY19}, where the notion of $F$-liftability is introduced and applied to perform computer-aided calculations of all the maximal automorphism groups of smooth quintic threefolds and smooth cubic threefolds, respectively. Our results provide an alternative approach to some steps in their analysis, which we believe that can be useful to simplify that kind of computations for other smooth $n$-folds, by considering faithful representations of subgroups of $\Aut(X)$ (e.g. by applying Theorem \ref{all-liftable}) and the restrictions that we have for these liftings (e.g. those imposed by Lemma~\ref{determinant-3k}, Proposition \ref{most-p-SL} and Corollary \ref{sylow-small-order}, that we explain below).

In a previous work by two of the authors of this paper \cite{GL13} (see also \cite{GL11}), we gave a criterion for a power of a prime number $p^r$ to be the order of an automorphism of a smooth hypersurface of dimension $n$ and degree $d$ as long as $p$ does not divide $d$ nor $d-1$. In order to prove Theorem~\ref{all-liftable}, we improved our previous result to also include powers of prime numbers also dividing $d$ or $d-1$ (see in Theorem~\ref{power-prime-order}). Indeed, letting $n,d,p,r$ be positive integers with $d\geq 3$, $(n,d)\neq (1,3), (2,4)$ and $p$ prime, then $p^r$ is the order of an $F$-liftable automorphism of a smooth hypersurface of dimension $n$ and degree $d$ if and only if
  \begin{enumerate}[$(i)$]
  \item $p$ divides $d-1$ and $r\leq k(n+1)$, where $d-1=p^ke$ with $\gcd(p,e)=1$, or
    \item $p$ divides $d$ and and there exists $\ell\in\{1,2,\ldots,n+1\}$ such that $(1-d)^\ell\equiv 1 \mod p^r$, or
    \item $p$ does not divide $d(d-1)$ and there exists $\ell\in\{1,2,\ldots,n+2\}$ such that $(1-d)^\ell\equiv 1 \mod p^r$.
  \end{enumerate}

We apply Theorem~\ref{power-prime-order} as a tool to prove Lemma~\ref{determinant-3k}, which is a key ingredient to our main result in Theorem~\ref{all-liftable}. Lemma~\ref{determinant-3k} gives conditions on the values that the determinant $\det(\tvarphi)$ may take, where $\tvarphi$ is an $F$-lifting of an automorphism $\varphi$ of a smooth hypersurface of dimension $n$ and degree $d$ of order $p^r$ a power of a prime number. In particular, it states that whenever $p$ does not divide $d$ nor $d-1$, then $\det(\tvarphi)=1$. We think this lemma may be interesting on its own in other contexts.

As an application, we study the possible orders of certain Sylow $p$-subgroups of the automorphism group of smooth hypersurfaces in terms of some easy-to-compute numerical invariant. This allows us to provide a useful corollary stating that, under certain hypothesis that are usually fulfilled for smooth hypersurfaces of low dimension, the Sylow $p$-subgroups are of order at most $p$ (see Corollary~\ref{sylow-small-order}). In particular, for cubic hypersurfaces, we obtain in Example~\ref{orders-full-cubics} that the order of the automorphism group of every smooth cubic hypersurface of dimension at most five is not divisible by $p^2$ for every prime number different from $2$ and $3$.

\subsection*{Outline of the article} The content of the paper is organized as follows. In Section~\ref{sec:single-automorphism-lifting} we introduce the notions of liftability and $F$-liftability and we prove that if  $d$ and $n+2$ are relatively prime, then every cyclic subgroup of the automorphism group of a smooth hypersurface of dimension $n$ and degree $d$ is $F$-liftable. In Section~\ref{sec:orders-liftable} we prove Theorem~\ref{power-prime-order} and Lemma~\ref{determinant-3k} reported above. In Section~\ref{sec:liftability-aut} we prove our main result in Theorem~\ref{all-liftable}. Finally, in Section~\ref{sec:applications} we give the announced application to certain Sylow $p$-subgroups of the automorphism group of  a smooth hypersurface of dimension $n$ and degree $d$.

\subsection*{Note on a recent preprint} 

During the last stages of proof-reading of this article and a few days before our submission to the arXiv, the preprint \cite{Zhe20}, whose results are closely related and mostly complementary to ours, appeared on the arXiv. We decided not to change the text of our paper and rather describe the possible interactions here in the introduction. 

Our Theorem \ref{power-prime-order}, that we prove by adapting the proof of our previous result  \cite[Theorem~1.3]{GL13},
is also a corollary of \cite[Theorem 1.4]{Zhe20}. On the other hand, the $F$-liftability assumptions in \cite[Theorem 1.1]{Zhe20} and \cite[Theorem 1.3]{Zhe20} can be removed by our Theorem~\ref{all-liftable} in the rather general case where $\gcd(d,n+2)=\gcd(d,N)=1$. In the notation of \cite{Zhe20}, our $n+2$ corresponds to $N$, the dimension of the vector space defining the projective space $\PP^{n+1}=\PP^{N-1}$.

It is also worth stressing that all results in both papers are obtained by means of different approaches and the main objectives of both papers are complementary. We study optimal conditions to ensure $F$-liftability of the full automorphism group of a smooth hypersurface and our Theorem~\ref{power-prime-order} is a tool to this main result. On the other hand, the author of \cite{Zhe20} studies $F$-liftable abelian group actions on smooth hypersurfaces and derives  \cite[Theorem 1.4]{Zhe20} as a consequence.

\subsection*{Acknowledgements} This collaboration started during the conference ``Fourth Latin American School on Algebraic Geometry and its Applications (ELGA IV)" held at Universidad de Talca, where the authors benefited from fruitful discussions on the subject of this paper with Igor Dolgachev, to whom we are very grateful. We would like to thank the institution for the support and hospitality, and for making this event successful. The second author would also like to thank our friend and colleague Ana Cecilia de la Maza, a great part of the writing of this paper was done while he was in preemptive self-isolation at her cabin in the Chilean Andes.

\section{Liftability of automorphisms of smooth
  hypersurfaces}
  \label{sec:single-automorphism-lifting}

The aim of this section is to prove that every automorphism of every smooth hypersurface of dimension $n\geq 1$ and degree $d\geq 3$ with  $(n,d)\neq (1,3), (2,4)$ admits an $F$-lifting if and only $d$ and $n+2$ are relatively prime.

Fix for the whole paper a vector space $V$ over $\CC$ of dimension $n+2$, with $n\geq 1$. Let $\GL(V)$, $\SL(V)$ and $\PGL(V)$ be the general linear group, the special linear group and the projective linear group, respectively. We denote by $\pi\colon \GL(V)\rightarrow \PGL(V)$ the canonical projection. For an automorphism $\tvarphi\colon V\rightarrow V$ in $\GL(V)$ we denote its image $\pi(\tvarphi)$ by $\varphi$. The automorphism $\tvarphi$ induces an automorphism $\tvarphi^*\colon V^*\rightarrow V^*$ on the dual space $V^*$ given by $\tvarphi^*(L)=L\circ \tvarphi$. Consequently, it also induces a graded automorphism $\tvarphi^*\colon S(V^*)\rightarrow S(V^*)$ on the symmetric algebra $S(V^*)$ of the vector space $V^*$ given also by $\tvarphi^*(F)= F\circ\tvarphi$. This automorphism restricts to an automorphism $\tvarphi^*\colon S^d(V^*)\rightarrow S^d(V^*)$ of forms of degree $d$.

An automorphism of an algebraic variety $X$ is a regular map $X\rightarrow X$ having a regular inverse map. The group of all automorphisms of $X$ is denoted by $\Aut(X)$. Let $\PP(V)$ be the complex projective space of dimension $n+1$ of the vector space $V$. The automorphism group of $\PP(V)$ is the projective linear group $\PGL(V)$. Let now $X$ be a hypersurface of $\PP(V)$ given as the zero set of a homogeneous form $F\in S^d(V^*)$ of degree $d$, the group of linear automorphisms, denoted by $\Lin(X)$, is the subgroup of $\Aut(X)$ of automorphisms that extend to an automorphism of the ambient space $\PP(V)$, i.e., $\Lin(X)=\{\varphi\in \PGL(V)\mid \varphi(X)=X\}$.

In full generality, the group $\Lin(X)$ is a proper subgroup $\Aut(X)$, nevertheless for smooth hypersurface, we have the following classical theorem \cite{MM64}, see also \cite[\S 6]{Ko19} and the references therein.

\begin{theorem} \label{matsumura-monsk} %
  Let $X$ and $Y$ be smooth hypersurfaces of dimension $n\geq 1$ and degree $d\geq 3$ in the complex projective space $\PP(V)$ and let $\tau\colon X\rightarrow Y$ is an isomorphism. If $(n,d)\neq  (1,3), (2,4)$, then $\tau$ is the restriction of a linear automorphism $\PP(V)\rightarrow \PP(V)$ in $\PGL(V)$. In particular, every automorphism of $X$ is linear.  Moreover, $\Aut(X)$ is a finite group.
\end{theorem}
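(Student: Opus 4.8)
The plan is to reduce the whole statement to the single claim that any isomorphism $\tau\colon X\to Y$ satisfies $\tau^*\mathcal{O}_Y(1)\cong\mathcal{O}_X(1)$; once this is known, everything else is formal. Indeed, from the restriction sequence $0\to\mathcal{O}_{\PP(V)}(1-d)\to\mathcal{O}_{\PP(V)}(1)\to\mathcal{O}_X(1)\to 0$ together with the vanishing $H^0(\PP(V),\mathcal{O}(1-d))=H^1(\PP(V),\mathcal{O}(1-d))=0$ (valid since $d\geq 2$ and $n+1\geq 2$), the restriction map identifies $H^0(X,\mathcal{O}_X(1))$ with $V^*$, so that the complete linear system $|\mathcal{O}_X(1)|$ recovers the given embedding $X\hookrightarrow\PP(V)$, and likewise for $Y$. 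If $\tau^*\mathcal{O}_Y(1)\cong\mathcal{O}_X(1)$, then $\tau$ carries one complete linear system to the other, inducing a linear isomorphism $V^*\to V^*$, i.e.\ an element of $\PGL(V)$ restricting to $\tau$. Taking $Y=X$ gives $\Aut(X)=\Lin(X)$.

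The core is therefore to prove $\tau^*\mathcal{O}_Y(1)\cong\mathcal{O}_X(1)$, and here I would split into cases according to $n$. For $n\geq 3$ the Lefschetz hyperplane theorem gives $\operatorname{Pic}(X)=\ZZ\cdot\mathcal{O}_X(1)$ and $\operatorname{Pic}(Y)=\ZZ\cdot\mathcal{O}_Y(1)$, so $\tau^*$ sends the ample generator $\mathcal{O}_Y(1)$ to $\pm\mathcal{O}_X(1)$, and effectivity rules out the minus sign. For $n=2$ the Picard rank may jump (Noether--Lefschetz), so rank-one-ness is unavailable and I would instead use the intrinsic canonical class: $X$ is simply connected, hence $\operatorname{Pic}(X)$ is torsion-free, and adjunction gives $K_X=\mathcal{O}_X(d-n-2)$. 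Since $\tau^*K_Y=K_X$ automatically, and $d-n-2\neq 0$ precisely when $d\neq 4$, torsion-freeness forces $\tau^*\mathcal{O}_Y(1)=\mathcal{O}_X(1)$. The excluded pair $(2,4)$ is exactly the case $K_X\cong\mathcal{O}_X$ (K3 surfaces), where this breaks down and non-linear automorphisms genuinely occur.

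For $n=1$ the Picard group carries a continuous part, so neither argument applies: here $X,Y$ are smooth plane curves of degree $d\geq 4$ (the case $d=3$, elliptic curves with $K_X\cong\mathcal{O}_X$, being excluded). When $d=4$ one has $K_X=\mathcal{O}_X(d-3)=\mathcal{O}_X(1)$, so the plane embedding is the canonical one and $\tau$ preserves it automatically. For $d\geq 5$ the canonical class alone is \emph{not} enough, since $\tau^*\mathcal{O}_Y(1)$ and $\mathcal{O}_X(1)$ could differ by a nonzero $(d-3)$-torsion point of $\operatorname{Pic}^0$; one must instead invoke the classical uniqueness of the planar series, namely that $|\mathcal{O}_X(1)|$ is the unique $g^2_d$ on such a curve, whence it is preserved by any isomorphism. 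I expect this step---pinning down $\mathcal{O}_X(1)$ intrinsically inside the continuous Picard group of a curve---to be the main obstacle, as it requires genuine projective curve theory rather than the soft Hodge-theoretic input used for $n\geq 2$.

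Finally, for finiteness: having identified $\Aut(X)=\Lin(X)$ as the stabiliser of $[F]\in\PP(S^d(V^*))$, it is a closed algebraic subgroup of $\PGL(V)$, so it suffices to check it is $0$-dimensional, i.e.\ that $H^0(X,T_X)=0$. This I would verify from the Euler and normal-bundle sequences $0\to\mathcal{O}_X\to V\otimes\mathcal{O}_X(1)\to T_{\PP(V)}|_X\to 0$ and $0\to T_X\to T_{\PP(V)}|_X\to\mathcal{O}_X(d)\to 0$ by a direct cohomology computation, valid for $d\geq 3$ and $(n,d)\neq(1,3)$ (the elliptic case being the one where global vector fields persist and $\Aut(X)$ is infinite).
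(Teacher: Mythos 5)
The paper does not actually prove this statement: Theorem~\ref{matsumura-monsk} is quoted as the classical Matsumura--Monsky theorem, with a citation to \cite{MM64} (and a pointer to \cite[\S 6]{Ko19}), and the rest of the paper simply uses it as a black box. So there is no internal proof to compare yours against; I can only assess your argument on its own terms.

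Your outline is correct and is essentially the standard modern (characteristic-zero) proof. The reduction to $\tau^*\mathcal{O}_Y(1)\cong\mathcal{O}_X(1)$ via the identification $H^0(X,\mathcal{O}_X(1))\cong V^*$ is right, and the required vanishings hold for all $n\geq 1$, $d\geq 2$. The case division is handled with the right tools: Lefschetz for $n\geq 3$; simple connectedness (hence torsion-free Picard group) plus adjunction for $n=2$, $d\neq 4$; the canonical embedding for $(n,d)=(1,4)$; and for plane curves of degree $d\geq 5$ you correctly diagnose that the canonical class pins down $\mathcal{O}_X(1)$ only up to $(d-3)$-torsion, so that one must invoke the classical uniqueness of the $g^2_d$ on a smooth plane curve of degree $\geq 4$. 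That is a genuine extra input, but it is a true classical theorem, so this is a citation, not a gap. In the finiteness step, one small point deserves to be made explicit: to conclude that $\Lin(X)$ is $0$-dimensional from $H^0(X,T_X)=0$, you need the restriction map from the Lie algebra of the stabilizer of $[F]$ to $H^0(X,T_X)$ to be injective; this follows from $H^0(\PP(V),T_{\PP(V)}(-d))=0$ for $d\geq 2$, which is immediate from the twisted Euler sequence. The main contrast with the cited original \cite{MM64} is that Matsumura and Monsky work over fields of arbitrary characteristic by more algebraic methods, whereas your proof is transcendental (Lefschetz theory, $\pi_1$, Hodge-theoretic vanishing); since the present paper works over $\CC$ throughout, that loss of generality is immaterial here.
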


\begin{remark}
  In the sequel, we will only consider smooth hypersurfaces of the projective space of dimension $n$ and degree $d$ with $(n,d)\neq (1,3), (2,4)$. We will make use of the fact that  $\Aut(X)=\Lin(X)$ and it is finite without referring to Theorem~\ref{matsumura-monsk}.
\end{remark}

Let us now introduce the notion of liftability that we will use in this paper. It was first considered by Oguiso and Yu in \cite[\S 4]{OY19}.

\begin{definition}\label{definition:liftability} \
  \begin{enumerate}[(1)]
  \item Let $G$ be a subgroup of $\PGL(V)$. We say that a subgroup $\tG \subset \GL(V)$ is a lifting of $G$ if the restriction of the canonical projection $\pi:\GL(V)\to \PGL(V)$ to $\tG$ induces an isomorphism $\pi|_{\widetilde{G}}:\tG\xrightarrow{\sim} G$. In this case, we say that $G$ is liftable. Furthermore, we say that $\varphi\in\PGL(V)$ is liftable if the generated group $\langle \varphi\rangle$ is liftable and we denote by $\tvarphi$ a lifting of $\varphi$, i.e., $\tvarphi$ is an element of $\GL(V)$ of the same order as $\varphi$ and such that $\pi(\tvarphi)=\varphi$

  \item Let now $X\subset \PP(V)$ be a smooth hypersurface of dimension $n\geq 1$ and degree $d\geq 3$ given by the homogeneous form $F\in S^d(V^*)$ and let $G$ be a subgroup of $\Aut(X)$. Assume also that $(n,d)\neq (1,3), (2,4)$. We say that $\tG \subset \GL(V)$ is an $F$-lifting of $G$ if $\tG$ is a lifting of $G$ and $\tvarphi^*(F)=F$, for all $\tvarphi\in \tG$. In this case, we say that $G$ is $F$-liftable. Furthermore, we say that $\varphi\in\Aut(X)$ is $F$-liftable if the generated group $\langle \varphi\rangle$ is $F$-liftable and we denote by $\tvarphi$ a lifting of $\varphi$, i.e., $\tvarphi$ is an element of $\GL(V)$ of the same order as $\varphi$ such that $\pi(\tvarphi)=\varphi$ and $\tvarphi^*(F)=F$.
  \end{enumerate}
\end{definition}
 
A priori, the notion of $F$-liftability defined by Oguiso and Yu depends on the particular embedding of $X$ into $\PP(V)$. In the following lemma, we show that indeed $F$-liftability is independent of the embedding.

\begin{proposition} 
  Let $X$ and $X'$ be smooth hypersurfaces of dimension $n\geq 1$ and degree $d\geq 3$ given by and forms $F$ and $F'$ in $S^d(V^*)$, respectively. Let also $\tau\colon X\rightarrow X'$ be an isomorphism and assume $(n,d)\neq (1,3), (2,4)$.  If $G\subseteq \Aut(X)$ is a subgroup, then $G$ is $F$-liftable if and only if $\tau\circ G\circ\tau^{-1}\subseteq \Aut(X')$ is $F'$-liftable.
\end{proposition}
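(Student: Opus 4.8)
The plan is to extend the isomorphism $\tau$ to a linear automorphism of the whole projective space and transport a given $F$-lifting by conjugation. First I would invoke Theorem~\ref{matsumura-monsk}: since $(n,d)\neq(1,3),(2,4)$, the isomorphism $\tau\colon X\to X'$ is the restriction of some $\bar\tau\in\PGL(V)$ with $\bar\tau(X)=X'$. Choosing any preimage $\ttau\in\GL(V)$ under $\pi$, the form $\ttau^*(F')=F'\circ\ttau$ vanishes exactly on $\bar\tau^{-1}(X')=X$; because $X$ is a smooth, hence reduced and irreducible, hypersurface, $F$ is determined up to a nonzero scalar by its zero locus, so $\ttau^*(F')=\mu F$ for some $\mu\in\CC^*$. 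Replacing $\ttau$ by $c\ttau$ with $c^d=\mu^{-1}$ rescales this to $\ttau^*(F')=F$, and I fix $\ttau$ with this normalization. Note that, viewed as subgroups of $\PGL(V)$, one has $\tau G\tau^{-1}=\bar\tau G\bar\tau^{-1}=:G'$.

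Next, assuming $G$ is $F$-liftable with $F$-lifting $\tG$, I would set $\tG'=\ttau\,\tG\,\ttau^{-1}\subseteq\GL(V)$ and claim it is an $F'$-lifting of $G'$. Conjugation by $\ttau$ is a group isomorphism $\tG\to\tG'$ intertwining $\pi$ with conjugation by $\bar\tau$, so $\pi(\tG')=G'$; moreover $\tG'$ meets the scalars $\CC^*\Id$ only in $\Id$ (since $\ttau\tvarphi\ttau^{-1}=\lambda\Id$ forces $\tvarphi=\lambda\Id$, hence $\tvarphi=\Id$ because $\tG$ does), which shows that $\pi|_{\tG'}$ is an isomorphism onto $G'$. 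For the invariance, take $\tpsi=\ttau\tvarphi\ttau^{-1}$ with $\tvarphi\in\tG$ and compute $\tpsi^*(F')=F'\circ\ttau\circ\tvarphi\circ\ttau^{-1}=(\ttau^*F')\circ\tvarphi\circ\ttau^{-1}=F\circ\tvarphi\circ\ttau^{-1}=F\circ\ttau^{-1}=F'$, using successively $\ttau^*(F')=F$, then $\tvarphi^*(F)=F$, and finally $F\circ\ttau^{-1}=(F'\circ\ttau)\circ\ttau^{-1}=F'$. Hence $G'$ is $F'$-liftable.

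The converse follows by the same argument applied to $\tau^{-1}\colon X'\to X$, whose linear extension can be taken to be $\ttau^{-1}$; this lift satisfies $(\ttau^{-1})^*(F)=F\circ\ttau^{-1}=F'$ by the computation above, and conjugation of an $F'$-lifting $\tG'$ by $\ttau^{-1}$ recovers $\tG=\ttau^{-1}\tG'\ttau$ as an $F$-lifting of $G$. The only genuinely delicate point is the first paragraph: one must use Matsumura--Monsky to linearize $\tau$ and, crucially, use irreducibility of the defining form to pin down $\ttau^*(F')$ up to scalar and then absorb that scalar using the $d$-th power freedom in the choice of lift. Once $\ttau$ is normalized so that $\ttau^*(F')=F$, both the transport of liftings and the verification of $F'$-invariance become purely formal manipulations of pullbacks.
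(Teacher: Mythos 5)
Your proof is correct and takes essentially the same route as the paper's: linearize $\tau$ via Theorem~\ref{matsumura-monsk}, rescale the lift $\ttau\in\GL(V)$ so that $\ttau^*(F')=F$, and transport $F$-liftings by conjugation $\tG\mapsto\ttau\,\tG\,\ttau^{-1}$. The only differences are matters of detail: you make explicit the $d$-th-root rescaling, the fact that $F$ is determined up to scalar by its zero locus, and the triviality of $\tG'\cap\CC^*\Id$, all of which the paper leaves implicit.
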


 \begin{proof}
   By Theorem~\ref{matsumura-monsk} we have that $\tau$ is the restriction of an automorphism $\tau\colon \PP(V)\rightarrow \PP(V)$ in $\PGL(V)$. Let $\ttau\in\GL(V)$ be such that $\pi(\ttau)=\tau$. Since  $X$ is given by the form $F$, then $X'$ is given by the  form $(\ttau^*)^{-1}(F)=F\circ\ttau^{-1}$. Hence, up to replacing $\ttau$ by $\lambda \ttau$ with $\lambda\in \CC^\times$, we can assume $F'=F\circ\ttau^{-1}$. Let now $\varphi\in G\subseteq \Aut(X)$ be a linear automorphism of $X$ and let $\tvarphi\in\GL(V)$ be such that $\pi(\tvarphi)=\varphi$. Clearly, $\tau\circ\varphi\circ\tau^{-1}$ is an automorphism of $Y$ and $\pi(\ttau\circ\tvarphi\circ\ttau^{-1})=\tau\circ\varphi\circ\tau^{-1}$. Moreover, 
   $$\tvarphi^*(F)=F \Leftrightarrow F\circ\tvarphi=F\Leftrightarrow  F'\circ\ttau\circ\tvarphi=F'\circ\ttau \Leftrightarrow F'\circ\ttau\circ\tvarphi\circ\ttau^{-1}=F' \Leftrightarrow \left(\ttau\circ\tvarphi\circ\ttau^{-1}\right)^*(F')=F'$$
   This yields that $\tvarphi$ is a lifting of $\varphi$ if and only if $\ttau\circ\tvarphi\circ\ttau^{-1}$ is a lifting of $\tau\circ\varphi\circ\tau^{-1}$. Finally, it is clear that the canonical map $\tG \to G$ is an isomorphism if and only if $\ttau\circ\tG\circ\ttau^{-1} \to \tau\circ G\circ \tau^{-1}$ is an isomorphism. This concludes the proof.
 \end{proof}

 We now fix some notation for computations and examples that need to be performed in coordinates. Let $\beta=\{\beta_0,\ldots,\beta_{n+1}\}$ be a basis of $V$ and let $\beta^*=\{x_0,\ldots,x_{n+1}\}$ be the corresponding dual basis of $V^*$. This choice induces a canonical isomorphism of the symmetric algebra $S(V^*)$ and the polynomial ring $\CC[x_0,\ldots,x_{n+1}]$. Under this isomorphism, a form $F \in S^d(V^*)$ of degree $d$ corresponds to a homogeneous polynomial of total degree $d$. As usual, the degree of $F$ with respect to variable $x_i$, denoted by $\deg_{x_i}(F)$, corresponds to the degree of $F$ seen as a polynomial in $A[x_i]$ where $A$ is the polynomial ring in the variables $\{x_0,\ldots,\widehat{x_i},\ldots,x_{n+1}\}$. 

 \begin{lemma} \label{diagonalization} %
   Let $\varphi\in \PGL(V)$ be an automorphism of $\PP(V)$ of finite order $q$.  Then, there exists a lifting $\tvarphi\in\GL(V)$ and every other lifting of $\varphi$ is given by $\lambda\tvarphi$ with $\lambda$ a $q$-root of unity. Moreover, $\tvarphi$ and $\tvarphi^*$ are diagonalizable and all their eigenvalues are $q$-roots of unity. Finally, up to a change of basis, we can assume that $\tvarphi$ and $\tvarphi^*$ are given by the same diagonal matrix $\diag(\xi^{\sigma_0},\ldots,\xi^{\sigma_{n+1}})$ in the dual bases $\beta$ and $\beta^*$, respectively, where $\xi$ is a primitive $q$-root of unity and $\sigma_i$ are integers.
 \end{lemma}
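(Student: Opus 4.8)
The plan is to produce the lifting from an arbitrary preimage by rescaling, then read diagonalizability off the resulting finite order, and finally diagonalize $\tvarphi$ in a single eigenbasis whose dual basis handles $\tvarphi^*$ automatically.

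First I would pick any $\psi\in\GL(V)$ with $\pi(\psi)=\varphi$. Since $\pi(\psi^q)=\varphi^q=\Id$ and $\ker\pi$ consists precisely of the scalar transformations, we get $\psi^q=\mu\,\Id$ for some $\mu\in\CC^\times$. Choosing $\lambda\in\CC^\times$ with $\lambda^q=\mu^{-1}$ (possible because $\CC$ is algebraically closed) and setting $\tvarphi=\lambda\psi$ yields $\tvarphi^q=\Id$, so the order of $\tvarphi$ divides $q$; as $\pi(\tvarphi)=\varphi$ has order $q$ and the order of the image divides the order of the element, the order of $\tvarphi$ equals $q$ and $\tvarphi$ is a lifting. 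For the second assertion, any other lifting $\tvarphi'$ satisfies $\pi(\tvarphi')=\pi(\tvarphi)$, hence $\tvarphi'=\lambda\tvarphi$ for some $\lambda\in\CC^\times$, and the requirement that $\tvarphi'$ again have order $q$ is equivalent to $(\lambda\tvarphi)^q=\lambda^q\,\Id=\Id$, i.e.\ to $\lambda$ being a $q$-th root of unity.

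Diagonalizability is then immediate. Since $\tvarphi^q=\Id$, the minimal polynomial of $\tvarphi$ divides $t^q-1$, which has $q$ distinct roots in $\CC$ (we are in characteristic zero), all of them $q$-th roots of unity; therefore $\tvarphi$ is diagonalizable with eigenvalues among the $q$-th roots of unity. The same applies to $\tvarphi^*$, either by observing that $(\tvarphi^*)^q=(\tvarphi^q)^*=\Id$, or by noting that $\tvarphi^*$ is represented by the transpose of the matrix of $\tvarphi$ and that transposition preserves the minimal polynomial.

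To finish, I would choose $\beta=\{\beta_0,\dots,\beta_{n+1}\}$ to be a basis of eigenvectors of $\tvarphi$, say $\tvarphi(\beta_i)=\lambda_i\beta_i$. Evaluating on the dual basis, $\tvarphi^*(x_i)=x_i\circ\tvarphi$ gives $\tvarphi^*(x_i)(\beta_j)=x_i(\lambda_j\beta_j)=\lambda_i\delta_{ij}$, so $\tvarphi^*(x_i)=\lambda_i x_i$; thus the dual basis $\beta^*$ diagonalizes $\tvarphi^*$ with the \emph{same} eigenvalues. Fixing any primitive $q$-th root of unity $\xi$, each $\lambda_i$ is a power $\xi^{\sigma_i}$ for some integer $\sigma_i$, and then both $\tvarphi$ and $\tvarphi^*$ are given by $\diag(\xi^{\sigma_0},\dots,\xi^{\sigma_{n+1}})$ in $\beta$ and $\beta^*$ respectively. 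There is no genuinely hard step here; the only point that requires care is the bookkeeping ensuring that the dual basis diagonalizes $\tvarphi^*$ with identical eigenvalues rather than their inverses, which hinges on the convention $\tvarphi^*(L)=L\circ\tvarphi$ adopted above.
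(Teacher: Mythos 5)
Your proof is correct and follows essentially the same route as the paper's: rescale an arbitrary preimage by a $q$-th root of the scalar $\mu$ to get a lifting of order $q$, deduce diagonalizability from the minimal polynomial dividing $t^q-1$, and use the duality/transpose relation to diagonalize $\tvarphi^*$ in the dual basis. Your explicit check that the dual basis yields the \emph{same} eigenvalues (rather than their inverses, under the convention $\tvarphi^*(L)=L\circ\tvarphi$) is a welcome clarification of a point the paper dispatches by simply invoking the transpose matrix.
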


 \begin{proof}
   Let $\tvarphi\in\GL(V)$ be any automorphism such that $\pi(\tvarphi)=\varphi$.  Since $\varphi$ has order $q$ we have that $\tvarphi^q=\lambda\Id$. Replacing $\tvarphi$ by $\tfrac{1}{\lambda'}\tvarphi$, where $(\lambda')^q=\lambda$ we obtain that $\tvarphi$ is also of order $q$ and so it is a lifting of $\varphi$.  Furthermore, if $\tvarphi'$ is another lifting of $\varphi$, then $\tvarphi'=\lambda\tvarphi$ and since both have order $q$, we obtain that $\lambda$ is a $q$-root of unity.  Now, since $\tvarphi^q=\Id$, the minimal polynomial of $\tvarphi$ divides $X^q-1$ which is a polynomial with only simple roots. This yields that $\tvarphi$ is diagonalizable and all its eigenvalues are $q$-roots of unity. This shows that $\tvarphi$ has the desired form. Finally, in the dual basis $\beta^*$, the dual  $\tvarphi^*$ is given by the transpose matrix of $\tvarphi$ which yields that $\tvarphi$ and $\tvarphi^*$ admit the same diagonalization in their respective bases.
 \end{proof}

 Let us now prove the following lemma stating that when a form $F\in S^d(V^*)$ of degree $d$ has a particular shape, then the corresponding hypersurface $X$ in $\PP(V)$ is singular.
 
\begin{lemma} \label{simple}
  Let $X$ be a hypersurface of dimension $n$ and degree $d$ in $\PP(V)$ given by the homogeneous form $F\in S^d(V^*)$. Assume there is a decomposition $V=V_0\oplus V_1\oplus V_2$ and that
  $$F\in S^{d-1}(V_0^*)\otimes S^1(V_1^*)\oplus S^1(V_0^*)\otimes S^{d-1}(V_1^*\oplus V_2^*)\oplus S^d(V_1^*\oplus V_2^*)\,.$$ %
  If $\dim V_0>\dim V_1$ then $X$ is singular.
\end{lemma}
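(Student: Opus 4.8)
The plan is to produce an explicit singular point of $X$ lying inside the linear subspace $\PP(V_0)\subset\PP(V)$. Recall that over $\CC$ the hypersurface $X=\{F=0\}$ is singular exactly when the partial derivatives of $F$ admit a common nontrivial zero: at such a zero the Euler identity $\sum_{u} u\,\partial F/\partial u=d\,F$, the sum running over all $n+2$ coordinates $u$, forces $F=0$ as well, so the point lies on $X$ and is a singular point there. I would fix a basis of $V$ adapted to the decomposition $V=V_0\oplus V_1\oplus V_2$, with dual coordinates $x_1,\dots,x_{a}$ on $V_0$, $y_1,\dots,y_{b}$ on $V_1$ and $z_1,\dots,z_{c}$ on $V_2$, where $a=\dim V_0$, $b=\dim V_1$, $c=\dim V_2$. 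The hypothesis on $F$ then reads
\[
F=\sum_{\ell} A_\ell(x)\,L_\ell(y)\;+\;\sum_i x_i\,G_i(y,z)\;+\;H(y,z),
\]
with each $A_\ell$ homogeneous of degree $d-1$ in the $x$-variables, each $L_\ell$ linear in $y$, each $G_i$ homogeneous of degree $d-1$ in $(y,z)$, and $H$ homogeneous of degree $d$ in $(y,z)$.

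Next I would evaluate the partials of $F$ along the subspace $\{y=z=0\}$, using crucially that $d\geq 3$, so that $d-1\geq 2$ and $d-2\geq 1$. Differentiating with respect to $x_i$ produces either terms carrying a linear factor $L_\ell(y)$ or the form $G_i(y,z)$ of degree $d-1\geq 2$, all of which vanish at $y=z=0$; likewise every piece of $\partial F/\partial z_k$ has degree $\geq 1$ in $(y,z)$ and so vanishes there. By contrast, differentiating the first summand with respect to $y_j$ yields $\sum_\ell A_\ell(x)\,\partial L_\ell/\partial y_j$, a form of degree $d-1$ in the $x$-variables alone, while the other summands again contribute terms vanishing at $y=z=0$. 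Thus on $\{y=z=0\}$ the Jacobian system collapses to the $b=\dim V_1$ homogeneous equations of degree $d-1$ in $x_1,\dots,x_a$ coming from $\partial F/\partial y_1,\dots,\partial F/\partial y_b$.

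Finally I would invoke the projective dimension theorem in $\PP(V_0)\cong\PP^{a-1}$: any collection of at most $a-1$ hypersurfaces in $\PP^{a-1}$ has nonempty intersection over the algebraically closed field $\CC$ (forms among the $\partial F/\partial y_j$ that vanish identically impose no condition, which only helps). The hypothesis $\dim V_0>\dim V_1$ means precisely $b\leq a-1=\dim\PP(V_0)$, so the $b$ forms $\partial F/\partial y_j$ share a common zero $[x^0]\in\PP(V_0)$. At the corresponding point $[x^0:0:0]\in\PP(V)$ all first partials of $F$ vanish, whence $F$ vanishes too by Euler, so $X$ is singular there. The computation of the second step is routine bookkeeping; the genuine content lies in the last step, where the inequality $\dim V_0>\dim V_1$ is exactly what guarantees that the number of equations does not exceed the dimension of the ambient $\PP(V_0)$, and I expect getting this matching right to be the one real point of the argument.
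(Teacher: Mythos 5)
Your proof is correct and takes essentially the same route as the paper's: restrict the Jacobian criterion to $\PP(V_0)$, observe that only the $\dim V_1$ partial derivatives along $V_1^*$ survive there as degree-$(d-1)$ forms in the $V_0$-coordinates, and conclude by the projective dimension theorem (Hartshorne~I.7.2, which the paper cites as B\'ezout) since $\dim V_1\leq \dim V_0-1$. The only cosmetic differences are that the paper invokes Euler's Lemma to discard one of these equations (a reduction your count never needs, as the strict inequality $\dim V_0>\dim V_1$ already gives the right matching) and obtains that the singular point lies on $X$ from the containment $\PP(V_0)\subset X$ rather than from the Euler identity.
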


\begin{proof}
  The projective space $\PP(V_0)$ is a subvariety of $\PP(V)$ isomorphic to a projective space of dimension $\dim V_0-1$. The points of $\PP(V_0)$ are given by the points $[v_0\oplus 0\oplus 0]$ inside $\PP(V)$, with $v_0\in V_0$. By the shape of $F$, the subvariety $\PP(V_0)$ is also contained in $X$. If we take bases of $V_0$, $V_1$ and $V_2$, the Jacobian Criterion applied to $F$ with respect to an element in the dual basis of $V_0^*$ gives an equation that is trivially satisfied for points in $\PP(V_0)$. The same holds for the Jacobian Criterion applied to $F$ with respect to an element in the dual basis of $V_2^*$. Now, the Jacobian Criterion applied to $F$ with respect to element in the dual basis of $V_1^*$ give exactly $\dim V_1$ equations, but by Euler's Lemma \cite[Chapter~I, Exercise~5.8]{Har77}, at most $\dim V_1-1$ of these equations are independent. Hence, the Jacobian Criterion for points in the projective space $\PP(V_0)$ that has dimension $\dim V_0-1$ gives $\dim V_1-1$ equations. If $\dim V_0>\dim V_1$, B\'ezout Theorem \cite[Chapter~I, Theorem~7.2]{Har77} shows that there is at least one solution and so $X$ is singular.
\end{proof}

\begin{remark} \label{simple-original}
  The case where $\dim V_0=1$ and $\dim V_1=0$ of this lemma has been used previously in several instances. Is has usually been stated in coordinates, if we take any non-trivial element on $V^*_0$ to be the $i$-th element $x_i$ in the basis of $V^*$, then Lemma~\ref{simple} states that if $\deg_{x_i}(F)<d-1$, then $X$ is singular. See \cite[Lemma~2.3]{GL11}, \cite[Lemma~1.2]{GL13}, \cite[Proposition~1.3~(1)]{OY19}, \cite[Proposition~4.3~(1)]{WY19}.
\end{remark}

As a first step into proving our main result, in the following two lemmas we state some straightforward consequences of the definition of  $F$-liftability for automorphisms of order $q$ of a smooth hypersurfaces of degree $d$.

\begin{lemma} \label{basic-liftings} %
  Let $X$ be a hypersurface of dimension $n\geq 1$ and degree $d\geq 3$ in $\PP(V)$ given by the homogeneous form $F\in S^d(V^*)$, with $(n,d)\neq (1,3), (2,4)$. Let $\varphi$ be an automorphism of $X$ of order $q$ and let $\tvarphi$ and $\tvarphi'$ be automorphisms in to $\GL(V)$. If $\tvarphi$ is an $F$-lifting, then $\tvarphi'$ is an $F$-lifting if and only if $\tvarphi'=\lambda \tvarphi$ with $\lambda$ a $\gcd(d,q)$-root of unity.

  In particular, the following hold.
  \begin{enumerate}[(i)]
  \item If $\gcd(d,q)=1$, then if an $F$-lifting exists, it is unique.
  \item If $q$ divides $d$ we have the following alternative: either $\varphi$ is not $F$-liftable, or every lifting of $\varphi$ is an $F$-lifting.
  \end{enumerate}
\end{lemma}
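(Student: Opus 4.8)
The plan is to reduce the whole statement to a single scalar computation, using Lemma~\ref{diagonalization} to parametrize all liftings of $\varphi$. Every $F$-lifting of $\varphi$ is in particular a lifting, so by Lemma~\ref{diagonalization} it has the form $\lambda\tvarphi$ for some $\lambda\in\CC^\times$ with $\lambda^q=1$; conversely, every such $\lambda\tvarphi$ with $\lambda^q=1$ is again a lifting of $\varphi$. Thus the liftings of $\varphi$ are exactly the elements $\lambda\tvarphi$ with $\lambda$ a $q$-th root of unity, and the task reduces to deciding which of these additionally satisfy the $F$-invariance condition.

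The key step is to track the effect of the scalar on that condition. Using that $\tvarphi^*(F)=F\circ\tvarphi$ and that $F$ is homogeneous of degree $d$, I would compute
$$(\lambda\tvarphi)^*(F)=F\circ(\lambda\tvarphi)=\lambda^d\,(F\circ\tvarphi)=\lambda^d\,\tvarphi^*(F)=\lambda^d F,$$
where the final equality uses the hypothesis that $\tvarphi$ is an $F$-lifting. Hence $(\lambda\tvarphi)^*(F)=F$ if and only if $\lambda^d=1$. Combining this with the lifting condition $\lambda^q=1$, the element $\lambda\tvarphi$ is an $F$-lifting exactly when $\lambda^q=\lambda^d=1$. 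The only genuinely arithmetic point is then the standard fact that $\{\lambda:\lambda^q=1\}\cap\{\lambda:\lambda^d=1\}=\{\lambda:\lambda^{\gcd(d,q)}=1\}$, which follows from a B\'ezout identity $\gcd(d,q)=ad+bq$. This yields the main equivalence: $\tvarphi'$ is an $F$-lifting if and only if $\tvarphi'=\lambda\tvarphi$ with $\lambda$ a $\gcd(d,q)$-root of unity.

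The two consequences then drop out immediately. For $(i)$, if $\gcd(d,q)=1$ the only admissible scalar is $\lambda=1$, so any $F$-lifting coincides with $\tvarphi$, giving uniqueness. For $(ii)$, if $q$ divides $d$ then $\gcd(d,q)=q$, so the $F$-lifting condition $\lambda^{\gcd(d,q)}=1$ becomes identical to the lifting condition $\lambda^q=1$; hence, as soon as one $F$-lifting $\tvarphi$ exists, every lifting $\lambda\tvarphi$ of $\varphi$ is automatically an $F$-lifting, which is exactly the stated dichotomy (either no $F$-lifting exists, or all liftings are $F$-liftings).

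I do not anticipate a serious obstacle here, since the argument is essentially bookkeeping once Lemma~\ref{diagonalization} is in hand. The only two points demanding care are the homogeneity scaling $F\circ(\lambda\tvarphi)=\lambda^d(F\circ\tvarphi)$, which is precisely where the degree $d$ enters and is the source of $\gcd(d,q)$ rather than $q$, and the elementary identity describing the intersection of the two groups of roots of unity.
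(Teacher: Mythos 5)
Your proposal is correct and follows essentially the same route as the paper's proof: parametrize all liftings as $\lambda\tvarphi$ with $\lambda^q=1$ via Lemma~\ref{diagonalization}, observe $(\lambda\tvarphi)^*(F)=\lambda^d F$, and conclude that $F$-liftings correspond to $\lambda$ satisfying $\lambda^q=\lambda^d=1$. The only difference is cosmetic: you make explicit, via B\'ezout, the identification of $\{\lambda:\lambda^q=\lambda^d=1\}$ with the $\gcd(d,q)$-roots of unity, a step the paper leaves implicit.
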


\begin{proof}
  Assume that $\tvarphi\in \GL(V)$ is an $F$-lifting of $\varphi$. By Lemma~\ref{diagonalization}, $\tvarphi'$ is a lifting of $\varphi$ if and only if $\tvarphi'=\lambda\tvarphi$, with $\lambda$ a $q$-root of unity. Furthermore, $\tvarphi'(F)=(\lambda\tvarphi)^*(F)=\lambda^d\tvarphi^*(F)$. Hence, we conclude that $\tvarphi'$ is an $F$-lifting if and only if $\lambda$ is a $q$-root of unity and $d$-root of unity. Now, $(i)$ follows directly and $(ii)$ follows from Lemma~\ref{diagonalization}.
\end{proof}

\begin{lemma} \label{order-3} %
  Let $X$ be a smooth hypersurface of dimension $n\geq 1$ and degree $d\geq 3$ in $\PP(V)$ given by the homogeneous form $F\in S^d(V^*)$, with $(n,d)\neq (1,3), (2,4)$. Let $p$ be a prime number dividing $d$ and let $\varphi$ be an automorphism of $X$ of order $p$. If $\varphi$ is not $F$-liftable, then $p$ also divides $n+2$.
\end{lemma}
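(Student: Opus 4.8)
The plan is to diagonalize a lifting of $\varphi$ and then detect the obstruction to $F$-liftability through a $\GL(V)$-covariant of $F$ that is nonzero exactly because $X$ is smooth, namely the discriminant. First I would, by Lemma~\ref{diagonalization}, fix a lifting $\tvarphi\in\GL(V)$ of order $p$, written in suitable dual coordinates as $\diag(\xi^{\sigma_0},\ldots,\xi^{\sigma_{n+1}})$ with $\xi$ a primitive $p$-th root of unity, so that $\tvarphi^*(x_i)=\xi^{\sigma_i}x_i$. Since $\varphi\in\Aut(X)$, the form $F$ is semi-invariant, $\tvarphi^*(F)=\lambda F$, and as $\tvarphi$ has order $p$ we get $\lambda^p=1$, say $\lambda=\xi^m$. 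Because $p\mid d$, replacing $\tvarphi$ by $\lambda'\tvarphi$ with $(\lambda')^p=1$ multiplies the semi-invariance factor by $(\lambda')^d=1$, so the factor $\lambda$ is intrinsic; combined with Lemma~\ref{basic-liftings}(ii) this shows that \emph{$\varphi$ is not $F$-liftable if and only if $\lambda\neq1$, i.e. $m\not\equiv0\pmod p$}, and this is the hypothesis I would carry forward. I would also record that $p\mid d$ forces $p\nmid d-1$.

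The key step is to exploit smoothness globally. Let $\Delta(F)$ denote the discriminant of the degree-$d$ form $F$ in $n+2$ variables: it is a polynomial in the coefficients of $F$, it is $\SL(V)$-invariant, and $\Delta(F)\neq0$ if and only if $X$ is smooth (equivalently, up to a nonzero scalar, $\Delta(F)$ is the resultant of $\partial F/\partial x_0,\ldots,\partial F/\partial x_{n+1}$, which is nonzero exactly when the partials have no common projective zero). I would compute the effect of $\tvarphi$ on $\Delta(F)$ in two ways. On one hand, $\SL(V)$-invariance together with homogeneity, tested on the central scaling $\tvarphi=t\cdot\Id$ under which $F\mapsto t^dF$, gives the covariance
\[
\Delta(\tvarphi^*F)=(\det\tvarphi)^{\,d(d-1)^{n+1}}\,\Delta(F).
\]
On the other hand, since $\tvarphi^*F=\lambda F$ and $\Delta$ is homogeneous of degree $(n+2)(d-1)^{n+1}$ in the coefficients of $F$,
\[
\Delta(\tvarphi^*F)=\Delta(\lambda F)=\lambda^{(n+2)(d-1)^{n+1}}\,\Delta(F).
\]

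Comparing the two expressions and using $\Delta(F)\neq0$ yields $\lambda^{(n+2)(d-1)^{n+1}}=(\det\tvarphi)^{\,d(d-1)^{n+1}}$. Writing $\lambda=\xi^m$ and $\det\tvarphi=\xi^{\sigma_0+\cdots+\sigma_{n+1}}$ and reading the exponents modulo $p$, the right-hand side is trivial because $p\mid d$, so I obtain $(n+2)(d-1)^{n+1}\,m\equiv0\pmod p$. As $p\nmid d-1$ the factor $(d-1)^{n+1}$ is a unit modulo $p$, and as $p\nmid m$ I conclude $p\mid n+2$, which is the claim.

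The main obstacle is precisely to isolate a clean input that forces $m\equiv0$ unless $p\mid n+2$. A purely combinatorial attempt, using that smoothness and Lemma~\ref{simple} (see Remark~\ref{simple-original}) force, for each $i$, a monomial $x_i^{d-1}x_{j(i)}$ of $F$ with $\sigma_{j(i)}\equiv\sigma_i+m\pmod p$, only shows that the functional graph $i\mapsto j(i)$ has all cycle lengths divisible by $p$; it does not control the vertices lying on trees, and so does not by itself give $p\mid n+2$. This is why I would rely on the global nonvanishing of the discriminant, which simultaneously encodes all monomials of $F$, rather than on local monomial bookkeeping. The remaining care is simply to cite correctly the covariance weight of $\Delta$ (or, equivalently, the scaling and change-of-variables formulas for the resultant of the partials) and the identification of $\Delta(F)\neq0$ with smoothness of $X$.
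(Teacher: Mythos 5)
Your proof is correct, and it takes a genuinely different route from the paper's. The paper never leaves the eigenspace picture: writing $V(i)$ for the eigenspace of $\tvarphi$ with eigenvalue $\xi^i$, it uses the semi-invariance $\tvarphi^*(F)=\xi F$ and the congruence $j(d-1)+(j+1)\equiv 1 \pmod p$ (this is where $p\mid d$ enters) to show that $F$ has exactly the shape required by Lemma~\ref{simple} with $V_0=V(j)$, $V_1=V(j+1)$; smoothness then forces $\dim V(j)\leq \dim V(j+1)$ for every $j$, and cyclicity $V(j)=V(j+p)$ makes all these dimensions equal, whence $n+2=p\dim V(0)$. Note that this is \emph{not} the ``local monomial bookkeeping'' you dismiss in your last paragraph: the full strength of Lemma~\ref{simple} (arbitrary $\dim V_0$, $\dim V_1$), rather than Remark~\ref{simple-original}, is what controls everything, so the tree-versus-cycle obstruction you worried about does not arise. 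Your argument instead deduces the claim from the discriminant covariance: with $\lambda=\xi^m$ intrinsic (your reduction via Lemma~\ref{basic-liftings}(ii) matches the paper's), the identity $\lambda^{(n+2)(d-1)^{n+1}}=(\det\tvarphi)^{d(d-1)^{n+1}}$, the nonvanishing of $\Delta(F)$, and $p\mid d$, $p\nmid d-1$, $p\nmid m$ immediately give $p\mid n+2$; all the facts you invoke (degree $(n+2)(d-1)^{n+1}$ of $\Delta$, its $\SL(V)$-invariance hence relative $\GL(V)$-invariance of weight $d(d-1)^{n+1}$, and $\Delta(F)\neq 0$ iff $X$ smooth) are standard. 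The trade-off: the paper's proof is self-contained (Lemma~\ref{simple} is proved there from the Jacobian criterion and B\'ezout) and yields the finer structural conclusion that all $p$ eigenspaces of $\tvarphi$ are equidimensional of dimension $(n+2)/p$, whereas yours is shorter but imports classical discriminant theory; on the other hand, your covariance identity holds for any finite-order lifting of any automorphism and thus packages global information in a way that is close in spirit to Lemma~\ref{determinant-3k}, so it could be reused elsewhere in the paper.
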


\begin{proof}
  Let $p$ be a prime number dividing $d$ and let $\varphi$ be an automorphism of $X$ of order $p$ that is not $F$-liftable. Let also $\tvarphi$ be a lifting of $\varphi$ to $\GL(V)$. By Lemma~\ref{diagonalization}, we have that $\tvarphi$ is diagonalizable and its eigenvalues are all $p$-roots of unity. Hence, we have $\tvarphi^*(F)=\xi F$ with $\xi$ a $p$-root of unity.  Since $\varphi$ is not $F$-liftable, $\xi\neq 1$ and since $p$ is prime, $\xi$ is a primitive $p$-root of unity. For every integer $i$, we let $V(i)$ be the eigenspace of $\tvarphi\colon V\to V$ associated to the eigenvalue $\xi^i$. We have $V(i)=V(i+kp)$ for every integer $k$. Fix an integer $j$, and let $V_0=V(j)$, $V_1=V(j+1)$ and $V_2$ a complementary of $V_0\oplus V_1$.  Since $\tvarphi^*(F)=\xi F$ and $j(d-1)+(j+1)=jd+1\equiv 1 \mod p$, we have that
  $$F\in S^{d-1}(V_0^*)\otimes S^1(V_1^*)\oplus S^1(V_0^*)\otimes S^{d-1}(V_1^*\oplus V_2^*)\oplus S^d(V_1^*\oplus V_2^*)\,.$$
  Since $X$ is smooth, applying Lemma~\ref{simple} yields $\dim V_0=\dim V(j)\leq \dim V_1=\dim V(j+1)$ for every integer $j$. Hence,
  $$\dim V(0)\leq \dim V(1)\leq\cdots\leq \dim V(p)=V(0)$$
  This yields $\dim V(i)=\dim V(0)$ for every integer $i$. Hence, we obtain $n+2=\dim V=p\dim V(0)$ proving the lemma.
\end{proof}

We recollect now several examples that will be required in the sequel. Our first example provides a converse to Lemma~\ref{order-3}. Indeed, for every prime number $p$ that divides $d$ and $n+2$, we provide a smooth hypersurface $X$ of degree $d$ and dimension $n$ and an automorphism $\varphi$ of order $p$ of $X$ that is not $F$-liftable.

\begin{example} \label{Klein-non-liftable} %
  The Klein hypersurface $X_K$ of dimension $n$ and degree $d$ is the hypersurface in $\PP(V)$ given by the homogeneous form
  $$K=x_0^{d-1}x_1+x_1^{d-1}x_2+\ldots+x_n^{d-1}x_{n+1}+x_{n+1}^{d-1}x_0\,.$$
  It is well known that $X_K$ is smooth \cite[Example~3.5]{GL13}. Let $p$ be a prime number dividing $\gcd(d,n+2)$ and assume that $n+2=pm$. Let $\tvarphi$ be the automorphism of order $p$ in $\GL(V)$ given in the basis $\beta$ by the diagonal matrix
$$\tvarphi=\diag(\underbrace{1,\xi,\xi^2,\ldots,\xi^{p-1},\ldots\ldots,1,\xi,\xi^2,\ldots,\xi^{p-1}}_{\mbox{\tiny{$m$-times}}}),$$
where $\xi$ is a primitive $p$-root of unity. A straightforward computation shows that $\tvarphi$ induces an automorphism $\varphi$ of the Klein hypersurface and $\tvarphi^*(K)=\xi K$. Hence, the automorphism $\varphi$ is not $K$-liftable by Lemma~\ref{basic-liftings} $(ii)$. In particular, $\Aut(X_K)$ is not $K$-liftable.
\end{example}

\begin{example} \label{Fermat} %
  The Fermat hypersurface $X_F$ of dimension $n$ and degree $d$ is the hypersurface in $\PP(V)$ given by the homogeneous form
  $$F=x_0^d+x_1^d+\ldots+x_n^d+x_{n+1}^d\,.$$
  A direct computation shows that $X_F$ is smooth. Furthermore, the automorphism group of $X_F$ is
  $$\Aut(X_F)=S_{n+2}\ltimes (\ZZ/d\ZZ)^{n+1}\,,$$
  where $S_{n+2}$ acts by permutation of variables and $(\ZZ/d\ZZ)^{n+1}$ acts diagonally in the first $n+1$ variables, see for instance \cite{Kon02}. In this case, $\Aut(X_F)$ is $F$-liftable.
\end{example}

\begin{example} \label{sum-forms} %
  Let $F\in S^d(V^*)$ and $F'\in S^d(W^*)$ be smooth homogeneous forms such that the corresponding hypersurfaces in $\PP(V)$ and $\PP(W)$ are smooth. Then the hypersurface given by $F+F'\in S^d(V^*\oplus W^*)$ in $\PP(V\oplus W)$ is also smooth.  This follows directly from the Jacobian Criterion since in this case, once we fix coordinates, the equations given by the vanishing of the partial derivatives split into two disjoint sets of variables. Thus, the existence of a singular point on the hypersurface defined by $F+F'$ in $\PP(V\oplus W)$ would imply the same for the hypersurface defined by $F$ or $F'$.
\end{example}

\begin{example} \label{strange-cover} %
  Let $X$ be the hypersurface of dimension $n$ and degree $d$ in $\PP(V)$ given by the homogeneous form
  $$F=
  \sum_{i=0}^{r-1} x_{i}^{d-1}x_{i+1} +\sum_{i=r}^{n+1}x_i^d\quad\mbox{where}\quad 0<r<n+2\,.$$
It is worth mentioning that $F$ is not the sum of two smooth homogeneous forms as in Example \ref{sum-forms}. However, a straightforward computation shows that the hypersurface $X \in \PP(V)$ corresponding to the homogeneous form $F$ is smooth. 
\end{example}

Our main result in this section is Proposition~\ref{proposition:char of liftability} showing that every automorphism of every smooth hypersurfaces of dimension $n$ and degree $d$ admit an $F$-lifting if and only if $d$ and $n+2$ are relatively prime. We first show the following lemma for particular automorphisms.

\begin{lemma} \label{non-liftable-single}
Let $X$ be a smooth hypersurface of dimension $n\geq 1$ and degree $d\geq 3$ in $\PP(V)$ given by the homogeneous form $F\in S^d(V^*)$, with $(n,d)\neq (1,3), (2,4)$. Let $\varphi$ be an automorphism of $X$ of order $q$ and let $\tvarphi$ be a lifting of $\varphi$ to $\GL(V)$ with $\tvarphi^*(F)=\xi^c F$, where $\xi$ is a primitive $q$-root of unity and $c\in \ZZ$. Then $\varphi$ is $F$-liftable if and only if $\gcd(d,q)$ divides $c$. In particular, if $\varphi$ is not $F$-liftable, then $d$ and $n+2$ are not relatively prime.
\end{lemma}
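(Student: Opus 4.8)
The plan is to treat the two assertions separately. For the equivalence ``$\varphi$ is $F$-liftable if and only if $\gcd(d,q)\mid c$'' I would start from the given lifting $\tvarphi$ and run through all liftings. By Lemma~\ref{diagonalization} every lifting of $\varphi$ is of the form $\lambda\tvarphi$ with $\lambda$ a $q$-th root of unity, and since $F$ is homogeneous of degree $d$ one computes $(\lambda\tvarphi)^*(F)=\lambda^d\xi^c F$. Writing $\lambda=\xi^{-a}$, an $F$-lifting exists precisely when the congruence $ad\equiv c \pmod q$ is solvable in $a$, which by elementary number theory happens if and only if $\gcd(d,q)\mid c$. This disposes of the first claim with no geometric input, and it also shows that non-$F$-liftability means $g:=\gcd(d,q)\nmid c$.

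For the ``in particular'' statement I would, assuming $g\nmid c$, produce a prime $p$ dividing both $d$ and $n+2$. First I reduce to a prime-power situation: choosing a prime $p$ with $v_p(g)>v_p(c)$ and setting $k=v_p(c)+1\le v_p(g)$, the power $\eta:=\tvarphi^{q/p^{k}}$ is a lifting of an automorphism of order $p^{k}$ with $\eta^*(F)=\omega^c F$ for a primitive $p^{k}$-th root of unity $\omega=\xi^{q/p^{k}}$; here $p^{k}\mid d$ while $p^{k}\nmid c$. The virtue of this reduction is that, because $p^{k}\mid d$, the ``partner residue'' map $s\mapsto c-(d-1)s$ collapses to the pure translation $s\mapsto s+c$ on $\ZZ/p^{k}\ZZ$, whose orbits all have size exactly $p$ (as $v_p(c)=k-1$).

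The geometric heart is then a dimension inequality between eigenspaces, which I would obtain by a direct Jacobian computation rather than by quoting Lemma~\ref{simple}. Diagonalizing $\eta$ as in Lemma~\ref{diagonalization}, write $m_s=\dim V(s)$ for the eigenspace of eigenvalue $\omega^s$. Fixing $s$, the subspace $\PP(V(s))$ lies on $X$ (no monomial $x^d$ with $x\in V(s)$ can occur, since $ds\equiv 0\not\equiv c \pmod{p^{k}}$), and along $\PP(V(s))$ the only partial derivatives of $F$ that do not vanish identically are those with respect to the variables of residue $s+c$; these give exactly $m_{s+c}$ forms of degree $d-1$ on $\PP(V(s))\cong\PP^{m_s-1}$. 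If $m_{s+c}<m_s$ these forms have a common zero, which (by Euler's relation) is a singular point of $X$; smoothness therefore forces $m_s\le m_{s+c}$ for every $s$. Running this inequality around a translation orbit $s,\,s+c,\,\ldots,\,s+(p-1)c$ of size $p$ yields $m_s=m_{s+c}$ throughout, so each orbit contributes $p\cdot m_s$ to $n+2=\sum_s m_s$. Hence $p\mid n+2$, and since $p\mid d$ we conclude $\gcd(d,n+2)>1$.

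The main obstacle is precisely this geometric step: one must resist invoking Lemma~\ref{simple} directly with $V_0=V(s)$ and $V_1=V(s+c)$, because $F$ need not lie in the prescribed subspace (monomials of intermediate $V_0$-degree with the correct $\eta^*$-eigenvalue may well occur). What rescues the argument is that such monomials contribute nothing to the partial derivatives of $F$ restricted to $\PP(V(s))$, so the Jacobian criterion can be applied by hand and still delivers the clean inequality $m_s\le m_{s+c}$. Packaging this as the count $n+2=p\sum_{\text{orbits}}m_{\text{orbit}}$ then finishes the proof.
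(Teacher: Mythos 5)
Your proof is correct, and on the ``in particular'' statement it takes a genuinely different route from the paper's. For the equivalence the two arguments essentially coincide: the paper produces an $F$-lifting $\xi^b\tvarphi$ from a B\'ezout relation $qa-db=c$ and obtains the converse by passing to a power of $\varphi$ and invoking Lemma~\ref{basic-liftings}~$(ii)$, while you observe that the liftings are exactly the $\xi^{-a}\tvarphi$ and that fixing $F$ amounts to solving $ad\equiv c \pmod q$, solvable precisely when $\gcd(d,q)\mid c$. The divergence is in the second claim. The paper chooses a prime $p$ dividing $\gcd(d,q)$ but \emph{not} dividing $c$, passes to $\psi=\varphi^{q/p}$ of order $p$, and quotes Lemma~\ref{order-3}; you instead choose $p$ with $v_p(\gcd(d,q))>v_p(c)$, pass to an element of prime-power order $p^k$ with $k=v_p(c)+1$, and redo the geometry by hand. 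Your extra care buys two things. First, the paper's prime need not exist: if, say, $4\mid\gcd(d,q)$ and $c\equiv 2\pmod 4$, then $\gcd(d,q)\nmid c$ yet every prime factor of $\gcd(d,q)$ divides $c$, so the paper's reduction is unavailable, whereas your valuation-based choice always works. Second, you are right that Lemma~\ref{simple} cannot be invoked off the shelf with $V_0=V(s)$, $V_1=V(s+c)$: monomials of intermediate $V_0$-degree with eigenvalue $\omega^c$ can genuinely occur (already for $d\geq 5$), and the same subtlety is glossed over in the paper's proof of Lemma~\ref{order-3}, where the displayed containment of $F$ is asserted from the eigenvalue relation alone. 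Your direct Jacobian computation on $\PP(V(s))$ --- only the residue-$(s+c)$ partials survive, giving $m_{s+c}$ forms of degree $d-1$ on $\PP^{m_s-1}$, whence $m_s\leq m_{s+c}$ by the projective dimension theorem, and equality around the size-$p$ translation orbits forces $p\mid n+2$ --- shows those intermediate monomials are harmless because they contribute nothing to the restricted partials. In short, the paper's proof is shorter because it is modular, but your self-contained argument covers all cases of the hypothesis uniformly and closes a genuine loophole in the paper's reduction step.
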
 

\begin{proof}
  By Theorem~\ref{matsumura-monsk} the order of $\varphi$ is finite, say $q\geq 1$. By Lemma~\ref{diagonalization}, there exists a lifting $\tvarphi$ of $\varphi$ to $\GL(V)$ and all the eigenvalues of $\tvarphi^*$ are $q$-roots of unity. Hence, we have that $\tvarphi^*(F)=\xi^c F$, for some integer $c$, where $\xi$ is a primitive $q$-root of unity.
  
   Assume first that $\gcd(d,q)$ divides $c$. By B\'ezout identity, there exists $a$ and $b$ be integers such that $qa-db=c$. A straightforward computation shows that $\xi^b\tvarphi$ is an $F$-lifting of $\varphi$. Hence $\varphi$ is $F$-liftable in this case.

  Assume now that $\gcd(d,q)$ does not divide $c$. Let $p$ be a prime factor of $\gcd(d,q)$ not dividing $c$ and write $q=pr$. Letting $\psi=\varphi^{r}$, which is an automorphism of $X$ of order $p$, we set $\tpsi=\tvarphi^r$ in $\GL(V)$. We have $\tpsi^*(F)=\xi^{rc}F=\omega^cF$ where $\omega=\xi^r$ is a primitive $p$-root of unity and $\omega^c\neq 1$ since $p$ does not divide $c$. By Lemma~\ref{basic-liftings}~$(ii)$ it follows that $\psi$ is not $F$-liftable. This implies that $\varphi=\psi^r$ is also not $F$-liftable.  
  
  The last statement follows by Lemma~\ref{order-3} applied to $\psi$ that shows that $p$ also divides $n+2$.
\end{proof}

\begin{proposition}\label{proposition:char of liftability}
  Let $n\geq 1$ and $d\geq 3$ with $(n,d)\neq (1,3), (2,4)$. Then every automorphism of every smooth hypersurface of dimension $n$ and degree $d$ is $F$-liftable if and only if $d$ and $n+2$ are relatively prime.
\end{proposition}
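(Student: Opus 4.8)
The statement is an equivalence, and both implications reduce to results already established in this section, so the plan is to argue each direction by contraposition and to identify a suitable witness in the direction that requires one.

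For the implication asserting that coprimality forces every automorphism to be $F$-liftable, I would argue contrapositively. Suppose $\varphi$ is an automorphism of some smooth hypersurface $X$ of dimension $n$ and degree $d$ that fails to be $F$-liftable, and let $q$ be its order. By Lemma~\ref{diagonalization} a lifting $\tvarphi$ exists, and all eigenvalues of $\tvarphi^*$ are $q$-roots of unity, so that $\tvarphi^*(F)=\xi^c F$ for some primitive $q$-root of unity $\xi$ and some integer $c$. The final assertion of Lemma~\ref{non-liftable-single} then yields directly that $d$ and $n+2$ are not relatively prime. Taking the contrapositive, if $\gcd(d,n+2)=1$ then no automorphism of any smooth hypersurface of dimension $n$ and degree $d$ can fail to be $F$-liftable.

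For the converse, that universal $F$-liftability forces $\gcd(d,n+2)=1$, I would again contrapose and produce a single counterexample whenever the gcd exceeds $1$. Let $p$ be a prime dividing $\gcd(d,n+2)$. Then Example~\ref{Klein-non-liftable} furnishes the Klein hypersurface $X_K$ of dimension $n$ and degree $d$ together with an explicit diagonal automorphism $\varphi$ of order $p$ satisfying $\tvarphi^*(K)=\xi K$ with $\xi\neq 1$, which is therefore not $K$-liftable by Lemma~\ref{basic-liftings}~$(ii)$. This single smooth hypersurface already violates the universal statement, completing the contrapositive.

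The combinatorial and geometric substance of the argument is not in this proposition itself but in the inputs it consumes: Lemma~\ref{non-liftable-single}, whose final clause rests on Lemma~\ref{order-3} and hence on the singularity criterion of Lemma~\ref{simple}, together with the smoothness and the eigenvalue computation for the Klein hypersurface recorded in Example~\ref{Klein-non-liftable}. Consequently I expect no genuine obstacle at this stage; the only care needed is to keep the two sets of hypotheses aligned, namely to invoke Lemma~\ref{non-liftable-single} for an \emph{arbitrary} smooth hypersurface in the first direction while letting a \emph{single} Klein hypersurface suffice in the second, and to retain the standing assumption $(n,d)\neq(1,3),(2,4)$ throughout so that $\Aut(X)=\Lin(X)$ and the notion of $F$-liftability is the relevant one.
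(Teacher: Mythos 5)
Your proposal is correct and follows essentially the same route as the paper: the ``if'' direction is exactly the final statement of Lemma~\ref{non-liftable-single}, and the ``only if'' direction is exactly the Klein hypersurface counterexample of Example~\ref{Klein-non-liftable} combined with Lemma~\ref{basic-liftings}~$(ii)$. The only difference is presentational --- you spell out the contrapositive framing and the eigenvalue setup from Lemma~\ref{diagonalization}, which the paper leaves implicit inside the cited lemma.
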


\begin{proof}
  The ``only if'' part follow directly from Example~\ref{Klein-non-liftable} that provides for every prime number $p$ dividing $\gcd(d,n+2)$, an automorphism of the Klein hypersurface of dimension $n$ and degree $d$ of that is not $F$-liftable. The ``if'' part, follows from the last statement in Lemma~\ref{non-liftable-single}.
\end{proof}

\begin{remark}
In the particular case of smooth cubic threefolds, Proposition~\ref{proposition:char of liftability} is implied by  \cite[Proposition~4.7 and Lemma~4.9]{WY19}. Moreover, this result also agrees with \cite[Example~4.7]{OY19}, where the authors prove that some quintic threefolds have automorphism that are not $F$-liftable. In this last case  $d=n+2=5$. Both these results were the main inspiration for this paper. We generalize \cite[Proposition~4.7]{WY19} in Lemma~\ref{lemma:other-p} below.
\end{remark}

\section{Liftable automorphisms of order a power of a prime}
\label{sec:orders-liftable}

In \cite{GL11} we gave an effective criterion to find all the prime orders of automorphisms of smooth cubic hypersurfaces and in \cite[Theorem~1.3]{GL13} we extended this criterion to hypersurfaces of any degree $d$ and automorphism of order $p^r$ relatively prime to $d$ and $d-1$. For our main result, we need information about  $F$-liftable automorphisms of orders $p^r$ for every prime number dividing $d$, see Lemma~\ref{determinant-3k}. For this reason, we begin this section by extending in Theorem~\ref{power-prime-order} the criterion in \cite[Theorem~1.3]{GL13} to include the case of $F$-liftable automorphisms of order $p^r$ a power of a prime without restrictions on divisibility. 

Recall that by Lemma~\ref{diagonalization}, every automorphism $\tvarphi$ in $\GL(V)$ of finite order $q$ can be presented, in an appropriate basis, by a diagonal matrix $\diag(\xi^{\sigma_0},\ldots,\xi^{\sigma_{n+1}})$, where $\xi$ is a primitive $q$-root of unity and $\sigma_i$ are integers. Since $\xi^a=\xi^{a+q}$ the integers $\sigma_i$ are naturally defined only up to a multiple of $q$. Hence, the automorphism $\tvarphi$ is uniquely determined by it order $q$ and the vector $\sigma=(\sigma_0,\sigma_1,\ldots,\sigma_{n+1})\in (\ZZ/q\ZZ)^{n+2}$, where $\ZZ/q\ZZ$ is the group of integers modulo $q$. We call $\sigma$ the signature of  $\varphi$.

\begin{theorem} \label{power-prime-order}
  Let $n,d,p,r$ be positive integers with $d\geq 3$, $(n,d)\neq (1,3), (2,4)$ and $p$ prime.  Then $p^r$ is the order of an $F$-liftable automorphism of a smooth hypersurface of dimension $n$ and degree $d$ if and only if
  \begin{enumerate}[$(i)$]
  \item $p$ divides $d-1$ and $r\leq k(n+1)$, where $d-1=p^ke$ with $\gcd(p,e)=1$, or
    \item $p$ divides $d$ and and there exists $\ell\in\{1,2,\ldots,n+1\}$ such that $(1-d)^\ell\equiv 1 \mod p^r$, or
    \item $p$ does not divide $d(d-1)$ and there exists $\ell\in\{1,2,\ldots,n+2\}$ such that $(1-d)^\ell\equiv 1 \mod p^r$.

  \end{enumerate}
\end{theorem}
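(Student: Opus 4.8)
The plan is to translate the statement into combinatorics on the signature of an $F$-lifting together with the dynamics of the map $x\mapsto(1-d)x$ on $\ZZ/p^r\ZZ$. First I would use Lemma~\ref{diagonalization} to replace an $F$-liftable automorphism $\varphi$ of order $q=p^r$ by a diagonal lifting $\tvarphi=\diag(\xi^{a_0},\ldots,\xi^{a_{n+1}})$, with $\xi$ a primitive $p^r$-th root of unity and signature $(a_0,\ldots,a_{n+1})\in(\ZZ/p^r\ZZ)^{n+2}$. The $F$-lifting condition $\tvarphi^*(F)=F$ says exactly that every monomial $x^\alpha$ occurring in $F$ has weight $\sum_i\alpha_i a_i\equiv 0\pmod{p^r}$. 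Two observations then drive the argument. (1) By Remark~\ref{simple-original} (the case $\dim V_0=1$, $\dim V_1=0$ of Lemma~\ref{simple}), smoothness forces $\deg_{x_i}F\geq d-1$ for each $i$, so $F$ contains a monomial $x_i^{d-1}x_{\tau(i)}$ (allowing $\tau(i)=i$, i.e.\ $x_i^d$); its weight-zero condition reads $a_{\tau(i)}\equiv(1-d)a_i\pmod{p^r}$. This defines a self-map $\tau$ of $\{0,\ldots,n+1\}$ whose functional graph is a disjoint union of cycles with trees feeding into them, and along every $\tau$-edge the label is multiplied by $1-d$. (2) Since order is measured in $\PGL(V)$, the order of $\varphi$ equals $p^r/\gcd\!\big(p^r,\{a_i-a_j\}_{i,j}\big)$, hence equals $p^r$ if and only if the $a_i$ are \emph{not} all congruent modulo $p$. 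Point (2) is what distinguishes the three cases, and it is easy to overlook.

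For necessity I would run the functional-graph analysis in the three regimes, governed by $v_p(1-d)$ (the $p$-adic valuation). If $p\nmid d(d-1)$ (case $(iii)$), then $1-d$ is a unit, so $v_p$ is preserved along $\tau$-edges; some label must be a unit (else all $a_i\equiv 0\bmod p$ and the order drops), this unit propagates to its cycle vertex $j$, and closing that cycle of length $\ell$ gives $((1-d)^\ell-1)a_j\equiv 0$, whence $(1-d)^\ell\equiv 1\pmod{p^r}$ with $\ell\leq n+2$. If $p\mid d$ (case $(ii)$), then $1-d\equiv 1\bmod p$, so all labels in one connected component are congruent mod $p$; achieving order $p^r$ therefore requires at least two components, so the component carrying a unit label has at most $n+1$ vertices and yields $(1-d)^\ell\equiv 1$ with $\ell\leq n+1$. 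If $p\mid d-1$ (case $(i)$), write $d-1=p^ke$ with $\gcd(p,e)=1$; then $(1-d)^\ell-1$ is a unit, so closing any cycle forces all its labels to be $\equiv 0\pmod{p^r}$. A unit label thus lives on a tree, and since each $\tau$-step raises $v_p$ by $k$ while the terminal cycle vertex must reach valuation $\geq r$, the depth $m$ satisfies $mk\geq r$; as the resulting path uses $m+1\leq n+2$ distinct vertices, we get $r\leq k(n+1)$.

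For sufficiency I would exhibit explicit smooth forms realizing order $p^r$, assembled from pieces certified smooth by the Examples and glued by Example~\ref{sum-forms}. In cases $(ii)$ and $(iii)$ I take a Klein-type cycle $x_{i_0}^{d-1}x_{i_1}+\cdots+x_{i_{\ell-1}}^{d-1}x_{i_0}$ (the form of Example~\ref{Klein-non-liftable}, hence smooth) of length $\ell$ with $(1-d)^\ell\equiv 1$, on labels $a_{i_t}=(1-d)^t$, completing the remaining variables by Fermat terms $x_j^d$ (Example~\ref{Fermat}) with label $0$. In case $(iii)$ the cycle already produces a unit difference $a_{i_0}-a_{i_1}=d a_{i_0}$, so $\ell\leq n+2$ suffices; in case $(ii)$ one needs at least one Fermat vertex to create a residue difference mod $p$, which is exactly the constraint $\ell\leq n+1$. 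In case $(i)$ I use the chain-into-loop form $\sum_{i=0}^{m-1}x_i^{d-1}x_{i+1}+\sum_{i=m}^{n+1}x_i^d$ of Example~\ref{strange-cover} with $m=\lceil r/k\rceil\leq n+1$, labels $a_i=(1-d)^i$ for $i\leq m$ (so $v_p(a_m)=mk\geq r$, i.e.\ $a_m\equiv 0$) and $a_i=0$ afterward; smoothness is Example~\ref{strange-cover}, and the unit difference $a_0-a_1=d$ gives order exactly $p^r$. In each construction the diagonal $\tvarphi$ has $\GL$-order $p^r$ and $\tvarphi^*(F)=F$, so $\langle\tvarphi\rangle$ is an $F$-lifting of $\langle\varphi\rangle$.

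The main obstacle I expect is the interplay between the two normalizations: that $F$-invariance pins the \emph{weights} to $0$ while the order is controlled by the \emph{differences} $a_i-a_j$ modulo $p$. It is precisely this that forces the "extra vertex" in case $(ii)$ (giving $n+1$ rather than $n+2$) and the required tree-depth in case $(i)$, and getting these bounds sharp in both the construction and the converse is the delicate part. A secondary point requiring care is that an arbitrary functional graph need \emph{not} yield a smooth hypersurface (a tree feeding into a cycle of length $\geq 2$ is typically singular, as a direct Jacobian check shows), so the constructions must be restricted to pure cycles, loops, and chains terminating in loops, each of which is certified smooth by the quoted examples. Finally, phrasing the criterion through the existence of $\ell$ with $(1-d)^\ell\equiv 1$, rather than through the exact multiplicative order of $1-d$, lets the argument treat $p=2$ uniformly with odd primes, since one only ever needs to realize a cycle of an admissible length.
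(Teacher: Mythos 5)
Your proof is correct and is essentially the paper's own argument: the necessity part coincides with the paper's (smoothness via Remark~\ref{simple-original} forces monomials $x_i^{d-1}x_{\tau(i)}$, weights propagate by $1-d$, and your functional-graph cycles and tree-depth $m$ are exactly the paper's pigeonhole sequence $k_0,k_1,\ldots,k_{n+2}$ and its index $j$ with $kj\geq r$), while your sufficiency forms (a Klein cycle of length $\ell$ plus Fermat terms, and a chain feeding into Fermat terms) are the very same forms the paper writes down, certified smooth by the same Examples. The only organizational difference is that you isolate the criterion that $\varphi$ has order $p^r$ in $\PGL(V)$ if and only if the signature entries are not all congruent mod $p$, which transparently replaces the paper's ad hoc scalar-power contradictions (the ``$\tvarphi^{p^{r-1}}$ is scalar'' step giving $\ell\leq n+1$ in case $(ii)$, and the order-pinning argument when $\ell=n+2$ in case $(iii)$).
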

  
\begin{proof}
  Let $X$ be a smooth hypersurface of dimension $n$ and degree $d$ in $\PP(V)$ given by a homogeneous form $F\in S^d(V^*)$. Assume that $X$ admits an $F$-liftable automorphism $\varphi$ of order $p^r$ and let $\tvarphi$ be an $F$-lifting. Up to a change of basis, we may and will assume that $\tvarphi$ is diagonal with signature $\sigma=(\sigma_0,\ldots,\sigma_{n+1})\in(\ZZ/p^r\ZZ)^{n+2}$.

  To prove the ``only if'' part of the theorem, let $k_0\in \{0,\ldots,n+1\}$ be such that $\sigma_{k_0}$ is relatively prime to $p$. This is always possible since otherwise, the order of $\varphi$ would have order smaller that $p^r$. By Remark~\ref{simple-original}, the homogeneous form $F$ contains a monomial $x_{k_0}^{d-1}x_{k_1}$ as a summand with non-zero coefficient, for some $k_1\in\{0,\ldots,n+1\}$, where it is possible that $k_1=k_0$. Since $\tvarphi$ is diagonal and $\tvarphi^*(F)=F$ we have that $x_{k_0}^{d-1}x_{k_1}$ is an eigenvector of $\tvarphi^*$ associated to the eigenvalue $1$. Hence $(d-1)\sigma_{k_0}+\sigma_{k_1}\equiv 0 \mod p^r$, and so
  \begin{align}
    \label{equiva}  
    \sigma_{k_1}\equiv(1-d)\sigma_{k_0}\quad \mod p^r\,.
  \end{align}

  Applying the above argument with $k_0$ replaced by $k_1$, we obtain that the homogeneous form $F$ contains a monomial $x_{k_1}^{d-1}x_{k_2}$ as a summand with non-zero coefficient, for some $k_2\in\{0,\ldots,n+1\}$ and $\sigma_{k_2}\equiv(1-d)\sigma_{k_1}\quad \mod p^r$. Iterating this process, for all $i\in\{3,\ldots,n+2\}$ we let $k_i\in\{0,\ldots,n+1\}$ be such that the homogeneous form $F$ contains a monomial $x_{k_1}^{d-1}x_{k_2}$ as a summand with non-zero coefficient.  By \eqref{equiva}, we have
  \begin{align*}
    \sigma_{k_i}\equiv (1-d)\sigma_{k_{i-1}}\equiv (1-d)^2\sigma_{k_{i-2}}
    \equiv(1-d)^i\sigma_{k_0} \mod p^r ,\ \forall i\in\{2,\ldots,n+2\}\,.
  \end{align*}

  Since $k_i\in\{0,\ldots,n+1\}$ there are at least two $i,j\in\{0,\ldots,n+2\}$, $i>j$ such that $k_i=k_j$. Thus $\sigma_{k_i}=\sigma_{k_j}$, and since $\sigma_{k_i}\equiv (1-d)^i\sigma_{k_0}\mod p^r$ and  $\sigma_{k_j}\equiv (1-d)^j\sigma_{k_0} \mod p^r$ with $\sigma_{k_0}$ relatively prime to $p^r$ we have
  \begin{align}
    \label{equiva2}
  (1-d)^{i}\equiv (1-d)^j \quad \mod p^r\,,
  \end{align}

  Assume first that $p$ does not divide $d(d-1)$. Then \eqref{equiva2} is equivalent to $(1-d)^{i-j}\equiv 1 \mod p^r$. Setting $\ell=i-j$ yields the ``only if'' part in this case.

  Assume now that $p$ divide $d$. Then \eqref{equiva2} is again equivalent to $(1-d)^{i-j}\equiv 1 \mod p^r$. Setting $\ell=i-j$ yields that there exists $\ell\in\{1,2,\ldots,n+2\}$ such that $(1-d)^\ell\equiv 1 \mod p^r$. Assume the smallest $\ell$ satisfying this property is $n+2$. Then $j=0$, $i=n+2$ and, up to reordering of the basis, the signature of $\tvarphi$ is given by  $$\sigma=\big(\sigma_{k_0},(1-d)\sigma_{k_0},(1-d)^2\sigma_{k_0},\ldots,(1-d)^{n+1}\sigma_{k_0}\big)\mod p^r\,.$$
Since $p$ divides $d$, we obtain that the signature of $\tvarphi^{p^{r-1}}$ is 
\begin{align*}
p^{r-1}\cdot\sigma&=\big(p^{r-1}\sigma_{k_0},p^{r-1}(1-d)\sigma_{k_0},p^{r-1}(1-d)^2\sigma_{k_0},\ldots,p^{r-1}(1-d)^{n+1}\sigma_{k_0}\big) \mod p^r \\
  &=p^{r-1}\sigma_{k_0}\cdot (1,1,\ldots,1) \mod p^r\,.
\end{align*}
Hence, we obtain that in this particular case $\tvarphi$ and so $\varphi$ is of order $p^{r-1}$ instead of order $p^r$ and so $\ell<n+2$ this yields the ``only if'' part in this case.

  Finally, assume that $p$ divides $d-1$ and let $d-1=p^ke$ with $\gcd(p,e)=1$. If $kj<r$, then \eqref{equiva2} is equivalent to $(1-d)^{i-j}\equiv 1\mod p^{r-kj}$ which is impossible since $1$ is not divisible by $p$ but $(1-d)^{i-j}$ and $p^{r-kj}$ are divisible by $p$. Hence, we conclude $kj\geq r$. In this case  both sides of \eqref{equiva2} are divisible by $p^r$ and so the equation always holds. We conclude $r\leq kj$ and the maximal value that $j$ can attain is $n+1$ since $j<i$ and both are in the set $\{0,\ldots,n+2\}$ . Hence, we conclude $r\leq k(n+1)$. This yields the ``only if'' part in this last case.

\medskip

To prove the ``if'' part, assume first that $p$ divides $d-1$ and let $d-1=p^ke$ with $\gcd(p,e)=1$. To prove the theorem in this case, it is enough to provide a smooth hypersurface of dimension $n$ and degree $d$ admitting an automorphism of order $p^r$ where $r=k(n+1)$. We let $F\in S^d(V^*)$ be the form
  $$F=
  \sum_{i=0}^{n} x_{i}^{d-1}x_{i+1} +x_{n+1}^d\,.$$ %
  By Example~\ref{strange-cover}, the hypersurface $X \in \PP(V)$ corresponding to the homogeneous form $F$ is smooth. Furthermore, a straightforward computation shows that $F$ is invariant by the diagonal automorphism $\tvarphi$ of order $p^r$ in $\GL(V)$ with signature
    $$\sigma=\big(1,(1-d),(1-d)^{2},\ldots,(1-d)^{n},0  \big)\,.$$ %
    The fact that the signature contains coordinates with $0$ and $1$ ensures that the order of $\tvarphi$ is $p^r$. Indeed, assume that $\tvarphi^{p^k}=\lambda\Id$ for some $k<r$. Then taking the coordinates with $0$ and $1$ in the signature yields $0\equiv p^k\mod p^r$ which is a contradiction. Hence, $\tvarphi$ induces an $F$-liftable $\varphi$ of $X$ of order $p^r$, proving the theorem in this case.

    We deal with the last two cases simultaneously. If $p$ does not divide $d(d-1)$ we let $\ell\in\{1,2,\ldots,n+2\}$ such that $(1-d)^\ell\equiv 1 \mod p^r$. If $p$ divides $d$ we let $\ell\in\{1,2,\ldots,n+1\}$ such that $(1-d)^\ell\equiv 1 \mod p^r$. In both cases, we let $F\in S^d(V^*)$ be the form
  $$F=
  \sum_{i=0}^{\ell-2} x_{i}^{d-1}x_{i+1}+x_{\ell-1}^{d-1}x_0 +\sum_{i=\ell}^{n+1}x_i^d\,.$$ By Examples~\ref{Klein-non-liftable}, \ref{Fermat} and \ref{sum-forms}, the hypersurface $X \subset \PP(V)$ corresponding to the homogeneous form $F$ is smooth. Furthermore, a straightforward computation shows that $F$ is invariant by the diagonal automorphism $\tvarphi$ of order $p^r$ in $\GL(V)$ with signature
    $$\sigma=\big(\underbrace{1,(1-d),(1-d)^{2},\ldots,(1-d)^{\ell-1}}_{\ell\mbox{ times}},\underbrace{0,\ldots,0}_{n+2-\ell\mbox{ times}}  \big)\,.$$ 
    In the case where $p$ divides $d$, again the signature contains $0$ and $1$ which ensures that $\tvarphi$ has indeed order $p^r$. In the case where  $p$ does not divide $d(d-1)$ there may be no $0$ in the signature. Assume that $\tvarphi^{p^k}=\lambda\Id$ for some $k<r$. Then the first two coordinates in the signature yield $p^k\equiv p^k-dp^k\mod p^r$. This is only possible if $p^{r-k}$ divides $d$, a contradiction since in this case $p$ does not divide $d(d-1)$. Hence, in both cases, $\tvarphi$ induces an $F$-liftable $\varphi$ of $X$ of order $p^r$. This concludes the proof.
  \end{proof}

\begin{remark}
The statement of our \cite[Theorem~1.3]{GL13} is incorrectly stated to work for all $q$ relatively prime to $d$ and $d-1$ instead of $q=p^r$ a power of a prime number. This mistake was hinted in \cite[Theorem~5.1]{OY19} where our result is stated under the right hypothesis and our proof is reproduced word by word. Our proof above of Theorem~\ref{power-prime-order} is an adaptation of the proof of \cite[Theorem~1.3]{GL13}. The requirement for $q=p^r$ to be a power of a prime number comes from the fact that $\sigma_{k_0}$ is required to be relatively prime to $q=p^r$. This is not necessarily possible if $q$ is not a power of a prime number.
\end{remark}

  For fixed $n\geq 1$ and $d\geq 3$ with $(n,d)\neq (1,3), (2,4)$, Theorem~\ref{power-prime-order} can be easily used to compute the powers of a prime number that appear as the order of an $F$-liftable automorphism of a hypersurface of dimension $n$ and degree $d$ by computing the prime factorization of $(1-d)^\ell-1$ for $\ell\in\{1,2,\ldots,n+2\}$. Below we give examples of this technique for low-dimensional smooth cubic hypersurfaces. We begin by specializing Theorem~\ref{power-prime-order} to this setting.

\begin{corollary} \label{cor:cubics}
 Let $n,p,r$ be  positive integers with $n\geq 2$ and $p$ prime. Then $p^r$ is the order of an $F$-liftable automorphism of a smooth cubic hypersurface of dimension $n$ if and only if
  \begin{enumerate}[$(i)$]
  \item $p=2$ and $r\leq n+1$, or
  \item $p=3$ and there exists $\ell\in\{1,2,\ldots,n+1\}$ such that $(-2)^\ell\equiv 1 \mod p^r$.
    \item $p\neq 2,3$ and there exists $\ell\in\{1,2,\ldots,n+2\}$ such that $(-2)^\ell\equiv 1 \mod p^r$.
  \end{enumerate}
\end{corollary}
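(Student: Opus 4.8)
The plan is to obtain Corollary~\ref{cor:cubics} as a direct specialization of Theorem~\ref{power-prime-order} to the case of cubic hypersurfaces, i.e.\ to $d=3$. First I would record that for $d=3$ the excluded pairs $(n,d)\neq(1,3),(2,4)$ amount simply to $n\neq 1$, so the hypothesis $n\geq 2$ of the corollary is exactly the translation of the standing assumption of Theorem~\ref{power-prime-order}. With that in place, $p^r$ is the order of an $F$-liftable automorphism of a smooth cubic $n$-fold if and only if one of the three conditions of Theorem~\ref{power-prime-order} holds with $d=3$, and it remains only to rewrite each condition under this substitution.

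Next I would carry out the substitution case by case. In case $(i)$ the hypothesis $p\mid d-1$ becomes $p\mid 2$, i.e.\ $p=2$; writing $d-1=2=2^{1}\cdot 1$ identifies $k=1$ and $e=1$, so the bound $r\leq k(n+1)$ collapses to $r\leq n+1$, which is part $(i)$ of the corollary. In case $(ii)$ the hypothesis $p\mid d$ becomes $p\mid 3$, i.e.\ $p=3$, while $(1-d)^\ell$ becomes $(-2)^\ell$, giving the congruence $(-2)^\ell\equiv 1 \bmod 3^{r}$ with $\ell\in\{1,\ldots,n+1\}$, which is part $(ii)$. Finally, in case $(iii)$ the hypothesis $p\nmid d(d-1)$ becomes $p\nmid 6$, i.e.\ $p\neq 2,3$, and again $(1-d)^\ell=(-2)^\ell$, yielding the congruence over $\ell\in\{1,\ldots,n+2\}$, which is part $(iii)$.

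To close the argument I would note that the three alternatives $p=2$, $p=3$, and $p\neq 2,3$ are mutually exclusive and exhaust all primes, so the three specialized conditions partition the possibilities in exactly the same way as the trichotomy $p\mid d-1$, $p\mid d$, $p\nmid d(d-1)$ does in Theorem~\ref{power-prime-order} when $d=3$. Hence the specialized statement is equivalent to the general one in this case, and the corollary follows.

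I do not expect any genuine obstacle here, since the result is a direct corollary; the only point requiring care is bookkeeping, namely verifying that $2=2^{1}$ forces $k=1$ (rather than a higher power) in case $(i)$, and keeping the index ranges $\{1,\ldots,n+1\}$ and $\{1,\ldots,n+2\}$ correctly matched to cases $(ii)$ and $(iii)$ respectively.
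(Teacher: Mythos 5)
Your proposal is correct and is exactly the paper's approach: the paper states Corollary~\ref{cor:cubics} as a direct specialization of Theorem~\ref{power-prime-order} to $d=3$, with no further argument needed. Your case-by-case substitution (including the bookkeeping that $d-1=2=2^1$ forces $k=1$, and that the trichotomy $p\mid d-1$, $p\mid d$, $p\nmid d(d-1)$ becomes $p=2$, $p=3$, $p\neq 2,3$) is precisely what the paper leaves implicit.
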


\begin{example}
  Let $n$ be a positive integer with $n\geq 2$ and $p$ prime. We will compute the values of $r$ such that  $p^r$ such that is the order of an $F$-liftable automorphism of a smooth cubic hypersurface of dimension $n$. Since $F$-liftable automorphisms of order $2^r$ are covered directly by Corollary~\ref{cor:cubics}~$(i)$, we are left to study the prime factorization of $(-2)^\ell - 1$ or, equivalently, the prime factorization of $2^\ell + (-1)^{\ell+1}$. For small values of $\ell$ we get the following factorizations.
 \begin{eqnarray*}
    2^1+(-1)^2=3 \qquad & 2^2+(-1)^3=3\qquad & 2^3+(-1)^4=3^2  \\
    2^4+(-1)^5=3\cdot 5 \qquad &2^5+(-1)^6=3\cdot 11 \qquad & 2^6 + (-1)^7 = 3^2\cdot 7 \\
    2^7+(-1)^8 = 3\cdot 43 \qquad & 2^8 + (-1)^9 = 3\cdot 5 \cdot 17 \qquad & 2^9+(-1)^{10} = 3^3\cdot 19.
  \end{eqnarray*}
We refer the interested reader to \cite{BLS75} for prime factorizations of $2^m \pm 1$ for large $m$. Now, let us take a closer look at low-dimensional examples.
\begin{itemize}
    \item[($n=2$)] By Proposition~\ref{proposition:char of liftability}, every automorphism of a smooth cubic surface is $F$-liftable. Therefore, Theorem~\ref{power-prime-order} yields that the powers of a prime number that appear as the order of an automorphism of a smooth cubic surface are $2^{r_2},3^{r_3}$ and $5$, where $r_2\leq 3$ and $r_3\leq 2$. This can be confirmed in \cite[Table~9.5]{Dol12}, where all cyclic groups acting on some smooth cubic surface are listed.
    
    \item[($n=3$)]  By Proposition~\ref{proposition:char of liftability}, every automorphism of a smooth cubic threefold is $F$-liftable. Therefore, Theorem~\ref{power-prime-order} yields that the powers of a prime number that appear as the order of an automorphism of a smooth cubic threefold are $2^{r_2},3^{r_3},5$ and $11$, where $r_2\leq 4$ and $r_3\leq 2$. This can be confirmed in \cite[Table~2]{WY19} where all the abelian groups acting on some cubic threefold are listed. See also their Proposition~5.1 that provides a similar computation using our earlier result \cite[Theorem~1.3]{GL13} that excluded the prime numbers $2$ and $3$.

    \item[($n=4$)] By Proposition~\ref{proposition:char of liftability}, we know that there are smooth cubic fourfolds admitting automorphisms that are not $F$-liftable, see Example~\ref{Klein-non-liftable}). On the other hand, Theorem~\ref{power-prime-order} yields that the powers of a prime number that appear as the order of an $F$-liftable automorphism of a smooth cubic fourfold are $2^{r_2},3^{r_3},5,7$ and $11$, where $r_2\leq 5$ and $r_3\leq 2$. It is worth noting that, after the work of Beauville and Donagi \cite{BD85}, we can attach to every smooth cubic fourfold $X$ an hyperk\"{a}hler fourfold given by its Fano variety of lines $F(X)$, and moreover $\Aut(X)$ is naturally isomorphic to the group of (polarized) automorphisms of $F(X)$ (see e.g. \cite[Proposition 4]{Cha12} or \cite[Corollary 2.3]{Fu16}). Since the hyperk\"{a}hler fourfold $F(X)$ carries a symplectic form $\omega$ it is natural to look at symplectic automorphisms of $X$, i.e. $\varphi\in \Aut(X)$ such that the corresponding $\widehat{\varphi}\in \Aut(F(X))$ satisfies $\widehat{\varphi}^*\omega = \omega$. It is proven in \cite{Fu16} that the powers of a primer number that appear as the order of a symplectic automorphism of a smooth cubic fourfold are $2^{s_2},3^{s_3},5,7$ and $11$, where $s_2\leq 3$ and $s_3\leq 2$ (see also \cite{Mon13} and \cite{LZ19}). The reader should note that Theorem~\ref{power-prime-order} is coherent with the latter result, since we have that every automorphism of order $5,7$ or $11$ is $F$-liftable (see Lemma \ref{non-liftable-single}), and in that case the symplectic condition stated in \cite[Lemma 3.2]{Fu16} follows from Lemma~\ref{determinant-3k} below. 
    
    \item[($n=5$)]  By Proposition~\ref{proposition:char of liftability}, every automorphism of a smooth cubic fivefold is liftable. Therefore, Theorem~\ref{power-prime-order} yields that the powers of a prime number that appear as the order of an automorphism of a smooth cubic fivefold are $2^{r_2},3^{r_3},5,7,11$ and $43$, where $r_2\leq 6$ and $r_3\leq 2$. 
\end{itemize}
\end{example}

 \begin{example}
   By Proposition~\ref{proposition:char of liftability}, every automorphism of a quartic threefold is liftable.  Theorem~\ref{power-prime-order} applied to this case yields that the powers of a prime number that appear as the order of an automorphism of a quartic threefold are $2^{r_2}$, $3^{r_3}$, $5$, $7$ and $61$ with $r_2,r_3\leq 4$. It is worth mentioning the remarkable fact that for a smooth quartic threefold $X$ the automorphism group $\Aut(X)$ and group of birational automorphisms $\operatorname{Bir}(X)$ coincide thanks to the seminal work of Iskovskikh and Manin \cite{IM71}, which is considered the starting point of birational rigidity (see e.g. \cite{Ko19b} for an historical account and new results).
 \end{example}

 For the remaining of this section, we fix a prime number $p$ not dividing $d-1$ and we will make use of the equivalence relation in the $\ZZ/p^r\ZZ$ given by $a\sim b$ if and only of $b\equiv (1-d)^ia\mod p^r$ for some $i\in \ZZ$. The relation $\sim$ is indeed an equivalence relation since $1-d$ is invertible in $\ZZ/p^r\ZZ$. Let now $\tvarphi$ be an automorphism of order $p^r$ in $\GL(V)$. By Lemma~\ref{diagonalization}, all the eigenvalues of $\tvarphi$ are $p^r$-roots of unity. Let $\xi$ be a primitive $p^r$-root of unity. For every $a\in \ZZ/p^r\ZZ$, we denote by $V(a)$ the eigenspace of $V$ associated to the eigenvalue $\xi^{a}$.

\begin{lemma}\label{equal-dim}
  Let $X$ be a smooth hypersurface of dimension $n\geq1$ and degree $d\geq3$ in $\PP(V)$  with $(n,d)\neq (1,3), (2,4)$. Let $\varphi$ be an $F$-liftable automorphism of order $p^r$ with $p$ prime not dividing $d-1$ and let $\tvarphi$ be an $F$-lifting of $\varphi$. If $a\sim b$, then $\dim V(a)=\dim V(b)$. 
\end{lemma}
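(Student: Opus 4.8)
The plan is to reduce the statement to the single-step inequality $\dim V(c)\le \dim V((1-d)c)$ for every $c\in\ZZ/p^r\ZZ$, and then close up the finite orbit of $c$ under multiplication by $1-d$. Since $p$ does not divide $d-1$, the element $1-d$ is a unit in $\ZZ/p^r\ZZ$, so $c\mapsto (1-d)c$ is a bijection of the finite set $\ZZ/p^r\ZZ$, and each $\sim$-class is a single finite cycle $a=a_0\to a_1\to\cdots\to a_{L-1}\to a_0$ with $a_{k+1}\equiv (1-d)a_k \bmod p^r$. Granting the single-step inequality, we get $\dim V(a_0)\le \dim V(a_1)\le\cdots\le\dim V(a_{L-1})\le\dim V(a_0)$, forcing all these dimensions to coincide; in particular $\dim V(a)=\dim V(b)$ whenever $a\sim b$.

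It remains to prove $\dim V(c)\le\dim V((1-d)c)$, and here I would argue exactly as in Lemma~\ref{order-3}. Fix $c$ and set $b=(1-d)c$. If $b\equiv c$ there is nothing to prove, so assume $b\not\equiv c$, i.e.\ $dc\not\equiv 0 \bmod p^r$. Put $V_0=V(c)$, $V_1=V(b)=V((1-d)c)$ and let $V_2$ be a complement of $V_0\oplus V_1$ in $V$. By Lemma~\ref{diagonalization} the matrices of $\tvarphi$ and $\tvarphi^*$ coincide, so a monomial built from dual coordinates lying in eigenspaces $V(c_1)^*,\ldots,V(c_d)^*$ is an eigenvector of $\tvarphi^*$ with eigenvalue $\xi^{c_1+\cdots+c_d}$. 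Because $\tvarphi$ is an $F$-lifting we have $\tvarphi^*(F)=F$, hence every monomial appearing in $F$ satisfies $c_1+\cdots+c_d\equiv 0\bmod p^r$.

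Using this I would verify that $F$ meets the monomial conditions driving Lemma~\ref{simple}. First, $F$ has no pure monomial in $S^d(V_0^*)$: such a monomial has eigenvalue $\xi^{dc}$, and $dc\not\equiv 0\bmod p^r$ by the assumption $b\not\equiv c$; in particular $\PP(V_0)\subset X$. Second, any monomial of $F$ of degree $d-1$ in the $V_0^*$-coordinates has the form $m\cdot\ell$ with $m\in S^{d-1}(V_0^*)$ and $\ell$ a single dual coordinate of degree $1$; since the pure case is excluded, $\ell\in V_1^*\oplus V_2^*$, and invariance forces $(d-1)c+e\equiv 0$, where $\xi^{e}$ is the eigenvalue of $\ell$, i.e.\ $e\equiv-(d-1)c\equiv(1-d)c=b\bmod p^r$, so $\ell\in V_1^*$. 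These are precisely the conditions placing $F$ in $S^{d-1}(V_0^*)\otimes S^1(V_1^*)\oplus S^1(V_0^*)\otimes S^{d-1}(V_1^*\oplus V_2^*)\oplus S^d(V_1^*\oplus V_2^*)$, so that since $X$ is smooth Lemma~\ref{simple} yields $\dim V_0\le\dim V_1$, that is $\dim V(c)\le\dim V((1-d)c)$, as needed.

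The main obstacle is this shape verification: matching the invariance of $F$ against the identity $(d-1)c+(1-d)c\equiv 0\bmod p^r$, which is the exact analogue of the step in Lemma~\ref{order-3} where $\tvarphi^*(F)=\xi F$ was matched against $j(d-1)+(j+1)\equiv 1$. The two monomial conditions above are exactly those used in the proof of Lemma~\ref{simple}, so no further analysis of intermediate-degree monomials is required. Everything else---the reduction to a single step and the orbit argument---is formal, once one observes that $1-d$ is invertible modulo $p^r$ precisely because $p\nmid d-1$.
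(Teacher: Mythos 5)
Your proof is correct and follows essentially the same route as the paper's: the single-step inequality $\dim V(c)\le\dim V((1-d)c)$ obtained from Lemma~\ref{simple} with $V_0=V(c)$, $V_1=V((1-d)c)$, followed by closing the finite cycle of inequalities using that $1-d$ is a unit modulo $p^r$. If anything, you are more careful than the paper at two points: you isolate the degenerate case $b\equiv c$ (i.e.\ $dc\equiv 0\bmod p^r$), which the paper's case split on whether $p^r$ divides $d$ does not quite cover when $c$ is divisible by $p$, and you correctly note that invariance of $F$ yields only the two monomial conditions actually used in the proof of Lemma~\ref{simple} (no pure $V_0^*$-monomial, and $V_0^*$-degree-$(d-1)$ monomials paired with a $V_1^*$-coordinate), whereas the paper asserts the full shape hypothesis, which invariance does not literally force since monomials of intermediate $V_0^*$-degree are not excluded.
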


\begin{proof}
  If $p^r$ divides $d$, then $1-d\equiv 1\mod p^r$. Hence $a\sim b$ if and only if $a\equiv b\mod p^r$ and so $V(a)=V(b)$. In this case the lemma holds trivially. Assume now $p^r$ does not divide $d$.  Letting $i\in \ZZ$ and $a\in \ZZ/p^r\ZZ$, we let $V_0=V((1-d)^ia)$ and $V_1=V((1-d)^{i+1}a)$. We have $(1-d)^i\not\equiv(1-d)^{i+1}\mod p^r$ and so $V_0$ and $V_1$ are eigenspaces associated to different eigenvalues. Hence, $V_0\cap V_1=\{0\}$. Let $V_2$ be a complementary of $V_0\oplus V_1$ in $V$. Since $\varphi^*(F)=F$ we have that
  $$F\in S^{d-1}(V_0^*)\otimes S^1(V_1^*)\oplus S^1(V_0^*)\otimes S^{d-1}(V_1^*\oplus V_2^*)\oplus S^d(V_1^*\oplus V_2^*)\,.$$ %
  Since $X$ is smooth, by Lemma~\ref{simple}, we conclude that $\dim V((1-d)^ia)\leq \dim V((1-d)^{i+1}a)$. Furthermore, let $\ell$ be any integer such that $(1-d)^\ell\equiv 1\mod p^r$ and let $k$ be such that $k\ell\leq i\leq (k+1)\ell$. We conclude
  $$\dim V(a)=\dim V((1-d)^{k\ell}a)\leq \dim V((1-d)^ia)\leq \dim V((1-d)^{(k+1)\ell}a)=\dim V(a)\,. $$
  This yields $V((1-d)^{i} a)=V(a)$ for every $i$ and so the lemma follows.
\end{proof}

For every $a\in \ZZ/p^r\ZZ$, we let $\overline{a}$ be the class of $a$ under $\sim$. Furthermore, if $a\not \equiv 0\mod p^r$ we can write $a=p^ka'$ with $\gcd(p,a')=1$ and $k<r$. We have $a\equiv (1-d)^ia \mod p^r$ if and only if $1\equiv (1-d)^i \mod p^{r-k}$. Hence, letting $\ell$ be the smallest positive integer such that $(1-d)^\ell\equiv 1\mod p^{r-k}$, we obtain that
$$\overline{a}=\left\{a, (1-d)a,(1-d)^2a,\ldots,(1-d)^{\ell-1}a\right\}\subset \ZZ/p^r\ZZ$$
and the cardinality of $\overline{a}$ is exactly $\ell$, i.e., all the elements listed above are different. 

\begin{lemma} \label{sum-multiple} 
 Let $p$ be a prime number not dividing $d-1$ and let $d=p^sd'$ with $\gcd(p,d')=1$.  then for every $a\in \ZZ/p^r\ZZ$ we have that the sum $\sum \overline{a}$ of all elements in the class $\overline{a}$ is congruent to $0$ modulo $p^{r-s}$.
\end{lemma}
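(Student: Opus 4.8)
The plan is to reduce the statement to a geometric sum followed by a $p$-adic valuation count. If $a\equiv 0\mod p^r$, then $\overline{a}=\{0\}$ and the assertion is trivial, so I would assume $a\not\equiv 0\mod p^r$ and write $a=p^ka'$ with $\gcd(p,a')=1$ and $0\le k<r$, exactly as in the paragraph preceding the lemma. That discussion identifies $\overline{a}$ with the $\ell$ distinct residues $a,(1-d)a,\ldots,(1-d)^{\ell-1}a$, where $\ell$ is the smallest positive integer with $(1-d)^\ell\equiv 1\mod p^{r-k}$. This is exactly where the hypothesis $p\nmid d-1$ is used: it guarantees that $1-d$ is a unit modulo every power of $p$, so that the relation $\sim$ is an equivalence relation and the integer $\ell$ is well defined.

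Next, choosing the integer representatives $(1-d)^ia$ for the elements of $\overline{a}$, I would sum the class as a geometric progression and set $S=\sum_{i=0}^{\ell-1}(1-d)^i$, so that $\sum\overline{a}\equiv aS\mod p^r$. The telescoping identity $(1-d)S-S=(1-d)^\ell-1$ rewrites as $dS=1-(1-d)^\ell$ as an identity of integers, since $(1-d)-1=-d$. Because $\ell$ was chosen with $(1-d)^\ell\equiv 1\mod p^{r-k}$, the right-hand side $1-(1-d)^\ell$ is divisible by $p^{r-k}$.

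Finally I would carry out the valuation bookkeeping. Writing $v_p$ for the $p$-adic valuation and using $d=p^sd'$ with $\gcd(p,d')=1$, the identity $dS=1-(1-d)^\ell$ gives $s+v_p(S)=v_p(dS)\ge r-k$, whence $v_p(S)\ge r-k-s$; combining this with the trivial bound $v_p(S)\ge 0$ yields $v_p(S)\ge\max(0,r-k-s)$. Therefore
$$v_p\!\left(\sum\overline{a}\right)=v_p(a)+v_p(S)=k+v_p(S)\ge k+\max(0,r-k-s)\ge r-s,$$
which is precisely the claim that $\sum\overline{a}\equiv 0\mod p^{r-s}$. The only point requiring a little care — and the main, mild, obstacle — is the valuation argument in the case where $p$ divides $d$, so that $d$ is not invertible modulo $p^r$ and one cannot simply cancel it; treating the cases $r-k-s\ge 0$ and $r-k-s<0$ uniformly through the $\max$ above resolves this cleanly.
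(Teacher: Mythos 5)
Your proof is correct and takes essentially the same route as the paper: both reduce to the geometric sum $S=\sum_{i=0}^{\ell-1}(1-d)^i$ over the class, use the identity $dS=1-(1-d)^\ell$ together with $(1-d)^\ell\equiv 1 \mod p^{r-k}$, and then divide out $d=p^sd'$. The only difference is cosmetic: you finish by $p$-adic valuation bookkeeping (which also handles the degenerate case $r-k-s<0$ cleanly via the $\max$), while the paper writes the quotient $-\tfrac{1}{d'}a'cp^{r-s}$ explicitly and invokes integrality to conclude divisibility by $p^{r-s}$.
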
 

\begin{proof}
  If $a\equiv 0\mod p^r$ then the lemma holds trivially. Assume $a\not \equiv 0\mod p^r$ we can write $a=p^ka'$ with $\gcd(p,a')=1$ and $k<r$. 
  Letting $\ell$ be the smallest positive integer such that $(1-d)^\ell\equiv 1 \mod p^{r-k}$ we let $c\in \ZZ$ be such that $(1-d)^\ell=1+cp^{r-k}$. We then have 
  $$\sum \overline{a}=\sum_{i=0}^{\ell-1}(1-d)^ia=a\cdot\frac{1-(1-d)^\ell}{1-(1-d)}=-p^ka'\cdot\frac{cp^{r-k}}{p^sd'}=-\frac{1}{d'}a'cp^{r-s}$$

The number in the above equation is an integer by the definition of the sum. Hence, we have that $d'$ divides $a'c$ and so the sum is a multiple of $p^{r-s}$. 
\end{proof}

The application of the above results that will be needed in the sequel is the following lemma.

\begin{lemma} \label{determinant-3k} %
  Let $X$ be a smooth hypersurface of dimension $n\geq 1$ and degree $d\geq 2$ in $\PP(V)$ with $(n,d)\neq (1,3), (2,4)$ given by a homogeneous form $F\in S^d(V^*)$. Let $\varphi$ be an automorphism of order $p^r$ and let $\tvarphi$ be an $F$-lifting, with $p$ prime. If $p$ does not divide $d-1$, then the determinant of $\tvarphi$ is a $p^{\min(s,r)}$-root of unity, where $d=p^sd'$ with $\gcd(p,d')=1$. In particular, if $p$ does not divide $d$, then the determinant of $\tvarphi$ is one.
\end{lemma}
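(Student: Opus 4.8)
The plan is to diagonalize the lifting, express its determinant as a single power of a primitive $p^r$-root of unity, and then pin down the relevant exponent modulo $p^r$ by reorganizing it through the eigenspace decomposition, using the two preceding arithmetic lemmas. First I would invoke Lemma~\ref{diagonalization} to assume, after a change of basis, that $\tvarphi=\diag(\xi^{\sigma_0},\ldots,\xi^{\sigma_{n+1}})$, where $\xi$ is a primitive $p^r$-root of unity and $\sigma=(\sigma_0,\ldots,\sigma_{n+1})\in(\ZZ/p^r\ZZ)^{n+2}$ is the signature of $\varphi$. Since the determinant is invariant under conjugation, this gives
$$\det(\tvarphi)=\xi^{S},\qquad S:=\sum_{i=0}^{n+1}\sigma_i\in\ZZ/p^r\ZZ.$$
Every diagonal entry is already a $p^r$-root of unity, so $\det(\tvarphi)$ is automatically a $p^r$-root of unity; the substance of the statement is the sharper bound $p^{\min(s,r)}$. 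Observe that $\det(\tvarphi)$ being a $p^{\min(s,r)}$-root of unity is equivalent to the divisibility $p^{\,r-\min(s,r)}\mid S$ (for an integer representative of $S$), because in that case $\xi^{S}$ is a power of $\xi^{p^{\,r-\min(s,r)}}$, which is a $p^{\min(s,r)}$-root of unity.

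When $s\geq r$ there is nothing to prove, since $r-\min(s,r)=0$ and $\det(\tvarphi)$ is already a $p^r$-root of unity. So I would assume $s<r$, in which case $\min(s,r)=s$ and the target reduces to $p^{\,r-s}\mid S$. The key reformulation is to compute $S$ through the eigenspaces: grouping the basis vectors according to their eigenvalue yields
$$S\equiv\sum_{a\in\ZZ/p^r\ZZ}a\cdot\dim V(a)\mod p^r,$$
and I would then reorganize this sum along the equivalence classes of the relation $\sim$, which partition $\ZZ/p^r\ZZ$.

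Now the two preceding lemmas do the work. By Lemma~\ref{equal-dim} the dimension $\dim V(a)$ is constant along each class $\overline{a}$ (this is precisely where the hypotheses $p\nmid d-1$ and the $F$-liftability of $\varphi$ are used), so the contribution of a class factors as $\dim V(a)\cdot\sum\overline{a}$. By Lemma~\ref{sum-multiple} each class sum satisfies $\sum\overline{a}\equiv 0\mod p^{\,r-s}$. Summing over all classes gives $S\equiv 0\mod p^{\,r-s}$, which is exactly the required divisibility.

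Finally, the ``in particular'' clause is the case $s=0$, i.e.\ $p\nmid d$: then $p^{\min(s,r)}=p^0=1$, so $\det(\tvarphi)=1$. I do not expect a genuine obstacle here, as the argument is essentially an assembly of Lemmas~\ref{equal-dim} and~\ref{sum-multiple}; the steps that require care are the bookkeeping that the $\sim$-classes faithfully reorganize $\sum_a a\,\dim V(a)$, and the separate (trivial) disposal of the case $s\geq r$, where Lemma~\ref{sum-multiple} is vacuous. One should also check that the borderline degree $d=2$ allowed in the statement is covered, either directly for smooth quadrics or by confirming that the inputs Lemmas~\ref{equal-dim} and~\ref{sum-multiple} remain applicable.
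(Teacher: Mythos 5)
Your proposal is correct and follows essentially the same route as the paper's proof: diagonalize via Lemma~\ref{diagonalization}, dispose of the case $r\leq s$ trivially, regroup the determinant over the eigenspaces and then over the $\sim$-equivalence classes, and conclude with Lemma~\ref{equal-dim} (constancy of $\dim V(a)$ on classes) and Lemma~\ref{sum-multiple} (class sums divisible by $p^{r-s}$). The only difference is cosmetic: you track the exponent sum $S$ additively where the paper writes the same computation as a product of eigenvalue powers.
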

 
\begin{proof}
  By Lemma~\ref{diagonalization}, the automorphism $\tvarphi$ is diagonalizable and so its determinant is given by $\det(\tvarphi)=\prod\lambda_i^{n_i}$, where $\{\lambda_i\}$ is the set of eigenvalues and $n_i$ is the dimension of the eigenspace asociated to the eigenvalue $\lambda_i$. If $r\leq s$ the lemma follows trivially since by Lemma~\ref{diagonalization} all the eigenvalues of $\tvarphi$ are $p^r$-roots of unity. In the sequel we assume $r>s$. Letting $\xi$ be a primitive $p^r$-root of unity, the determinant of $\tvarphi$ is given by
  $$\det(\tvarphi)=\prod_{a\in\ZZ/p^r\ZZ}(\xi^{a})^{\dim V(a)}$$
  Letting $C$ be the set of equivalence classes by the relation $\sim$, we can write this last equation as
    $$\det(\tvarphi)=\prod_{\overline{b}\in C} \prod_{a\in \overline{b}}(\xi^{a})^{\dim V(a)}$$  
  By Lemma~\ref{equal-dim}, the dimension of $V(a)$  is constant when $a$ varies in the equivalence class $\overline{b}$ so we have
  $$\det(\tvarphi)=\prod_{\overline{b}\in C} \left(\xi^{\sum \overline{b}}\right)^{\dim V(b)}$$ Finally, the result follows from Lemma~\ref{sum-multiple} since  $\sum \overline{b}\equiv 0\mod p^{r-s}$ implies that $\xi^{\sum \overline{b}}$ is a $p^s$-root of unity.
\end{proof}

\section{Liftability of the  automorphism group}
\label{sec:liftability-aut}

In this section we will prove our main theorem stating that $\Aut(X)$ is $F$-liftable for every smooth hypersurface of dimension $n$ and degree $d$ if and only if $d$ and $n+2$ are relatively prime.

\begin{lemma}\label{lemma:SL}
  Let $X$ be a smooth hypersurface of dimension $n\geq 1$ and degree $d\geq 3$ in $\PP(V)$  with $(n,d)\neq (1,3), (2,4)$ and let $\varphi$ be an automorphism of $X$ of order $p^r$ with $p$ a prime number dividing $d$ and $r\geq 1$. If $p$ does not divide $n+2$, then $\varphi$ admits an unique $F$-lifting to $\SL(V)$.
\end{lemma}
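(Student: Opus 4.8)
The plan is to prove the statement in two steps: first that $\varphi$ is $F$-liftable, and then that among its $F$-liftings exactly one lies in $\SL(V)$. Throughout, write $d=p^{s}d'$ with $\gcd(p,d')=1$, set $t=\min(s,r)$, and note that since $p\mid d$ we have $p\nmid d-1$, so Lemma~\ref{determinant-3k} will be available. Fix a lifting $\tvarphi$ of $\varphi$ with signature $\sigma$ and $\tvarphi^{*}(F)=\xi^{c}F$, where $\xi$ is a primitive $p^{r}$-root of unity; by Lemma~\ref{non-liftable-single}, $\varphi$ is $F$-liftable precisely when $p^{t}=\gcd(d,p^{r})$ divides $c$.

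For the first step I would generalize the argument of Lemma~\ref{order-3} from order $p$ to order $p^{r}$. Consider the affine map $f\colon \ZZ/p^{r}\ZZ\to\ZZ/p^{r}\ZZ$, $f(a)=(1-d)a+c$; it is a bijection because $1-d\equiv 1\pmod p$ is a unit. Since $\tvarphi^{*}(F)=\xi^{c}F$, every monomial of $F$ has $\tvarphi^{*}$-weight $\equiv c\pmod{p^{r}}$, and as $\deg_{x_{i}}(F)\geq d-1$ for each $i$ by smoothness (Remark~\ref{simple-original}), a variable $x_{i}$ with $\sigma_{i}=a$ occurs in a monomial $x_{i}^{d-1}x_{j}$ with $\sigma_{j}\equiv c-(d-1)a=f(a)\pmod{p^{r}}$. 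Applying Lemma~\ref{simple} to $V=V(a)\oplus V(f(a))\oplus V_{2}$, exactly as in Lemma~\ref{order-3}, then gives $\dim V(a)\leq\dim V(f(a))$ whenever $f(a)\neq a$. Now suppose $\varphi$ is not $F$-liftable, i.e.\ $p^{t}\nmid c$. The fixed points of $f$ are the solutions of $da\equiv c\pmod{p^{r}}$, which exist iff $p^{t}\mid c$; hence $f$ is fixed-point-free, so $\dim V(a)\leq\dim V(f(a))$ for all $a$, and running around each finite $f$-orbit forces $\dim V(a)$ to be constant on orbits.

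The crux is then to show that every $f$-orbit has size divisible by $p$, for this yields $n+2=\dim V=\sum_{O}|O|\dim V(a_{O})\equiv 0\pmod p$, contradicting $p\nmid n+2$ and thereby proving $F$-liftability. The orbit length of $a$ is the least $k\geq 1$ with $f^{k}(a)=a$, which by $f^{k}(a)=(1-d)^{k}a+cS_{k}$ and $(1-d)^{k}-1=-dS_{k}$ is equivalent to $S_{k}(da-c)\equiv 0\pmod{p^{r}}$, where $S_{k}=\sum_{m=0}^{k-1}(1-d)^{m}$. Since $v_{p}(S_{k})=v_{p}((1-d)^{k}-1)-s$, the $p$-adic valuation of $(1-d)^{k}-1$ (lifting-the-exponent) gives $v_{p}(S_{k})=v_{p}(k)$ for $p$ odd, and $v_{p}(S_{k})=0$ when $p=2$ and $k$ is odd. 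As $f$ is fixed-point-free we have $v_{p}(da-c)<r$ for all $a$, so any $k$ with $f^{k}(a)=a$ must satisfy $v_{p}(S_{k})\geq r-v_{p}(da-c)\geq 1$; hence such a $k$ cannot be odd (resp.\ prime to $p$), and every orbit length is a multiple of $p$. This orbit-size computation, together with the $p=2$ exceptional case of the exponent estimate, is the main technical obstacle.

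For the second step, assume now $\varphi$ is $F$-liftable and let $\tvarphi$ be an $F$-lifting. By Lemma~\ref{basic-liftings} the $F$-liftings are exactly the $\lambda\tvarphi$ with $\lambda$ a $p^{t}$-root of unity, and by Lemma~\ref{determinant-3k} the scalar $\det(\tvarphi)$ is itself a $p^{t}$-root of unity. Since $\det(\lambda\tvarphi)=\lambda^{n+2}\det(\tvarphi)$, an $F$-lifting $\lambda\tvarphi$ lies in $\SL(V)$ iff $\lambda^{n+2}=\det(\tvarphi)^{-1}$; as $p\nmid n+2$, the map $\lambda\mapsto\lambda^{n+2}$ is a bijection of the group of $p^{t}$-roots of unity onto itself, so there is exactly one such $\lambda$. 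This produces the unique $F$-lifting of $\varphi$ in $\SL(V)$ and completes the proof.
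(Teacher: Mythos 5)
Your proof is correct, but its first half follows a genuinely different route from the paper's. For the normalization into $\SL(V)$ (your second step) the two arguments coincide in substance: the paper applies Lemma~\ref{determinant-3k} and a B\'ezout identity $p^{\min(s,r)}a-(n+2)b=c$ to rescale an $F$-lifting into $\SL(V)$, and then checks uniqueness by the same determinant computation, while you package existence and uniqueness together in the observation that $\lambda\mapsto\lambda^{n+2}$ is a bijection of the group of $p^{\min(s,r)}$-roots of unity; these are the same argument in different clothing. The real difference is how you obtain $F$-liftability of $\varphi$: the paper disposes of it in one line by citing Proposition~\ref{proposition:char of liftability}, whereas you generalize the eigenspace count of Lemma~\ref{order-3} from order $p$ to order $p^{r}$, using the affine bijection $f(a)=(1-d)a+c$ of $\ZZ/p^{r}\ZZ$, constancy of $\dim V(a)$ along $f$-orbits, and a lifting-the-exponent estimate showing every orbit length is divisible by $p$. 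Your longer route actually buys something: the paper's citation is loose, since Proposition~\ref{proposition:char of liftability} assumes $\gcd(d,n+2)=1$ while the lemma only assumes $p\nmid n+2$; the honest reference is the last statement of Lemma~\ref{non-liftable-single}, whose proof requires a prime factor of $\gcd(d,p^{r})$ not dividing $c$, and such a prime need not exist when $p\mid c$ but $p^{\min(s,r)}\nmid c$ (possible as soon as $\min(s,r)\geq 2$). Your orbit argument covers exactly this residual case, so besides proving the lemma under its stated hypotheses it repairs a small gap in the paper's chain of references; the price is the extra $p$-adic machinery. One shared caveat: when invoking Lemma~\ref{simple} you, like the paper itself in Lemmas~\ref{order-3} and~\ref{equal-dim}, only verify the required shape of $F$ for monomials of $V(a)$-degree at least $d-1$ (intermediate degrees are not excluded by the weight computation); this is harmless because the proof of Lemma~\ref{simple} only uses the top-degree monomials, so your step is at the same level of rigor as the paper's own applications of that lemma.
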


\begin{proof}
  Let $d=p^sd'$ with $\gcd(p,d')=1$. Since $p$ does not divide $n+2$, there exists an $F$-lifting $\tvarphi$ by Proposition~\ref{proposition:char of liftability}. Furthermore, by Lemma~\ref{determinant-3k}~$(i)$ we have $\det(\tvarphi)=\xi^c$ where $\xi$ is a primitive $p^{\min(s,r)}$-root of unity. Since $p$ does not divide $n+2$, by B\'ezout identity, there exists $a$ and $b$ such that $p^{\min(s,r)}a-(n+2)b=c$. The automorphism $\xi^b\tvarphi$ belongs $\SL(V)$ since $\det(\xi^b\tvarphi)=\xi^{(n+2)b}\xi^c=1$. Furthermore, $\xi^b\tvarphi$ is again an $F$-lifting of $\varphi$ by Lemma~\ref{basic-liftings} since $\xi$ is a $p^{\min(s,r)}$-root of unity and $p^{\min(s,r)}$ equals $\gcd(d,p^r)$. 
  
  Finally, let $\tvarphi$ and $\tvarphi'$ be two $F$-liftings of $\varphi$ in $\SL(V)$. By Lemma~\ref{basic-liftings}, we have $\tvarphi'=\lambda \tvarphi$ with $\lambda$ a $p^{\min(s,r)}$-root of unity. Moreover, we have  $$1=\det(\tvarphi)=\lambda^{n+2}\det(\tvarphi')=\lambda^{n+2}$$
Hence, $\lambda$ is also a $(n+2)$-root of unity. Since $p$ does not divide $n+2$, we conclude $\lambda=1$.
\end{proof}

We will apply the above lemma to prove that whenever $p$ divides $d$ and does not divide $n+2$, then every $p$-group in $\Aut(X)$ admits a unique $F$-lifting to $\SL(V)$.

\begin{lemma} \label{lemma:3-group} %
  Let $X$ be a smooth hypersurface of dimension $n\geq 1$ and degree $d\geq 3$ in $\PP(V)$  with with $\gcd(d,n+2)=1$ and let $G\subseteq \Aut(X)$ be a $p$-subgroup for some prime number $p$. If $p$ divides $d$, then $G$ admits a unique $F$-lifting contained in $\SL(V)$.
\end{lemma}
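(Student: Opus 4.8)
The plan is to bootstrap from the single-automorphism result of Lemma~\ref{lemma:SL} to the whole $p$-group $G$ by exhibiting a canonical $\SL(V)$-lift for each element and then checking that these canonical lifts are compatible with the group law. First I would invoke Lemma~\ref{lemma:SL}: since $\gcd(d,n+2)=1$ in particular $p\nmid n+2$, so every element $\varphi\in G$ of order $p^r$ has a \emph{unique} $F$-lifting $\tvarphi$ lying in $\SL(V)$. Uniqueness is the crucial structural input, because it lets me \emph{define} a set-theoretic section $s\colon G\to \SL(V)$ by sending each $\varphi$ to its unique $\SL$-valued $F$-lift, with no ambiguity to track.

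The main work is then to show $s$ is a group homomorphism, i.e. $s(\varphi\psi)=s(\varphi)s(\psi)$ for all $\varphi,\psi\in G$. The argument I would run is: given $\varphi,\psi\in G$, the product $s(\varphi)s(\psi)$ is an element of $\GL(V)$ that projects under $\pi$ to $\varphi\psi$ (since $\pi\circ s=\mathrm{id}_G$ by construction, $\pi$ being a homomorphism). Moreover $s(\varphi)s(\psi)$ lies in $\SL(V)$, being a product of two determinant-one matrices, and it is an $F$-lift because $\bigl(s(\varphi)s(\psi)\bigr)^*(F)=s(\psi)^*\bigl(s(\varphi)^*(F)\bigr)=s(\psi)^*(F)=F$. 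So $s(\varphi)s(\psi)$ is an $\SL(V)$-valued $F$-lift of $\varphi\psi$; by the uniqueness clause of Lemma~\ref{lemma:SL} applied to the element $\varphi\psi$ (whose order is again a power of $p$ since $G$ is a $p$-group), it must equal $s(\varphi\psi)$. This forces $s$ to be a homomorphism, hence injective (as $\pi\circ s=\mathrm{id}$), and its image $\tG:=s(G)\subset\SL(V)$ is then a subgroup projecting isomorphically onto $G$ with every element fixing $F$, i.e. an $F$-lifting contained in $\SL(V)$.

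For overall uniqueness of the lifting $\tG$ itself, I would argue that any $F$-lifting $\tG'\subset\SL(V)$ of $G$ must, element by element, consist of $\SL(V)$-valued $F$-lifts of the corresponding elements of $G$; by the per-element uniqueness in Lemma~\ref{lemma:SL} each such element coincides with the value of $s$, so $\tG'=s(G)=\tG$. This handles both existence and uniqueness in one stroke.

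The step I expect to be the main obstacle is pinning down that the group law is genuinely respected, i.e. that the uniqueness hypothesis of Lemma~\ref{lemma:SL} really does apply to the product element $\varphi\psi$. One must be slightly careful that Lemma~\ref{lemma:SL} is stated for an automorphism of order exactly $p^r$ (a prime power), and every element of a finite $p$-group indeed has prime-power order, so the hypothesis is met; the determinant-one and $F$-invariance computations are then routine, and the whole argument is essentially the standard principle that a section of a group extension landing in a subgroup with a uniqueness property is automatically a homomorphism. The verification that $\left(s(\varphi)s(\psi)\right)^*(F)=F$ uses only that $\tvarphi^*(F)=F$ for each factor together with $(\alpha\beta)^*=\beta^*\alpha^*$, so no serious calculation is needed.
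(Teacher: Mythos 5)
Your overall strategy is essentially the paper's own: both arguments bootstrap from the per-element existence and uniqueness in Lemma~\ref{lemma:SL}, the only structural difference being that you build a set-theoretic section $s$ of $\pi$ and check it is multiplicative, while the paper takes the subgroup of $\SL(V)$ generated by the canonical lifts and checks that the kernel of $\pi$ restricted to it is trivial. However, your key multiplicativity step has a gap. To conclude $s(\varphi)s(\psi)=s(\varphi\psi)$ you invoke the uniqueness clause of Lemma~\ref{lemma:SL}, asserting that $s(\varphi)s(\psi)$ is an ``$\SL(V)$-valued $F$-lift of $\varphi\psi$''. But in this paper an $F$-lifting of an automorphism is, by Definition~\ref{definition:liftability}, an element of $\GL(V)$ \emph{of the same order} as that automorphism which projects to it and fixes $F$; the uniqueness statement of Lemma~\ref{lemma:SL} (whose proof runs through Lemmas~\ref{basic-liftings} and~\ref{diagonalization}) applies only to such elements. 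You verified the projection, the determinant, and the $F$-invariance, but not the equality of orders, and this is not automatic: a priori $s(\varphi)s(\psi)$ could be a nontrivial root of unity times a genuine lift of $\varphi\psi$, hence of strictly larger order. Your cautionary remark about orders only addresses the order of $\varphi\psi$ downstairs (indeed a power of $p$), not the order of the candidate lift upstairs, which is where the issue lies.

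The gap is real but short to repair, and the repair is precisely the scalar-killing argument the paper applies to the kernel of $\pi|_{\tG}$. Let $q$ be the order of $\varphi\psi$ and set $\ttau=s(\varphi)s(\psi)$. Then $\pi(\ttau^q)$ is the identity in $\PGL(V)$, so $\ttau^q=\lambda\Id$ for some $\lambda\in\CC^\times$; since $\ttau^*(F)=F$ we get $\lambda^d=1$, and since $\det(\ttau)=1$ we get $\lambda^{n+2}=1$, whence $\lambda=1$ because $\gcd(d,n+2)=1$. Thus the order of $\ttau$ divides $q$, and it is a multiple of $q$ because $\pi(\ttau)=\varphi\psi$ has order $q$; so $\ttau$ has order exactly $q$, the uniqueness clause of Lemma~\ref{lemma:SL} applies, and $s$ is a homomorphism. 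Note that this is exactly the point where the full hypothesis $\gcd(d,n+2)=1$ (and not merely $p\nmid n+2$) is used, so it cannot be waved away as routine. The remainder of your argument is fine: injectivity of $s$ is immediate, and in your uniqueness paragraph the elements of any competing $F$-lifting $\tG'\subset\SL(V)$ do have the correct orders, because $\pi|_{\tG'}$ is an isomorphism of groups, so the per-element uniqueness of Lemma~\ref{lemma:SL} legitimately applies there.
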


\begin{proof}
  Since $\gcd(d,n+2)=1$ and $p$ divides $d$, we have $p$ does not divide $n+2$. By Lemma~\ref{lemma:SL}, every element $\varphi$ in $G$ admits an unique $F$-lifting $\tvarphi$ to $\SL(V)$.  Let $\tG$ be the group generated in $\SL(V)$ by $\widetilde{H}$, where
  $$\widetilde{H}=\{\tvarphi\in \SL(V)\mid \tvarphi  \mbox{ the unique $F$-lifting of $\varphi\in G$ to }\SL(V)\}\,.$$
  The restriction of the canonical projection $\pi\colon\GL(V) \to \PGL(V)$ to $\widetilde{G}$ gives a surjective morphism $\pi|_{\tG}:\tG \to G$ and $\tvarphi^*(F)=F$ for every $\tvarphi\in \tG$. Let $\tvarphi$ be in the kernel of $\pi|_{\tG}$, then $\tvarphi=\lambda\Id$ and since $\tvarphi^*(F)=F$ we obtain $\lambda^d=1$. Furthermore, $\tvarphi\in \SL(V)$ so $1=\det(\tvarphi)=\lambda^{n+2}$. This yields $\lambda=1$ since $\gcd(d,n+2)=1$ and so $\pi|_{\tG}$ is an isomorphism.

    In particular, we proved that $\tG=\widetilde{H}$ and so the uniqueness statement follows from the uniqueness of the $F$-lifting of every element in $G$ to $\SL(V)$.
\end{proof}

The following lemma follows directly from  \cite[Theorem~4.8]{OY19} in the case where $d$ is prime. This result is proven applying the Hochschild-Serre exact sequence in that case. We remark that, by \cite[Theorem~4.8]{OY19}, the condition that that $\gcd(d,n+2)=1$ in the lemma could be removed when $d$ is prime. In the case where $\gcd(d,n+2)=1$, we are able to give a low-tech proof where we replace the role played by the Hochschild-Serre exact sequence in the proof of \cite[Theorem~4.8]{OY19} with the determinant map.

\begin{lemma} \label{lemma:other-p} %
  Let $X$ be a smooth hypersurface of dimension $n\geq 1$ and degree $d\geq 3$ in $\PP(V)$  with with $\gcd(d,n+2)=1$ and let $G\subseteq \Aut(X)$ be a subgroup. If $d$ is also relatively prime to the order of $G$, then $G$ admits a unique $F$-lifting.
\end{lemma}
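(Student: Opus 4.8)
The plan is to build the $F$-lifting of $G$ as the image of a canonical set-theoretic section $s\colon G\to\GL(V)$ and then show that the coprimality hypotheses force $s$ to be a group homomorphism. First I would invoke the two hypotheses separately: since $\gcd(d,n+2)=1$, Proposition~\ref{proposition:char of liftability} guarantees that \emph{every} element $\varphi\in G$ admits at least one $F$-lifting; and since $\gcd(d,|G|)=1$, every $\varphi\in G$ has order $q$ dividing $|G|$ by Lagrange, hence $\gcd(d,q)=1$, so by Lemma~\ref{basic-liftings}~$(i)$ the $F$-lifting of $\varphi$ is \emph{unique}. This lets me define $s(\varphi)=\tvarphi$, the unique element of $\GL(V)$ with $\pi(\tvarphi)=\varphi$ and $\tvarphi^*(F)=F$.

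The heart of the argument is that $s$ is multiplicative. Given $\varphi,\psi\in G$ with unique $F$-liftings $\tvarphi=s(\varphi)$ and $\tpsi=s(\psi)$, the composite $\tvarphi\tpsi$ projects to $\varphi\psi$ because $\pi$ is a homomorphism, and it preserves $F$ by the direct computation
$$(\tvarphi\tpsi)^*(F)=F\circ\tvarphi\circ\tpsi=(F\circ\tvarphi)\circ\tpsi=F\circ\tpsi=F,$$
using $\tvarphi^*(F)=F$ and then $\tpsi^*(F)=F$. Thus $\tvarphi\tpsi$ is an $F$-lifting of $\varphi\psi$. Since $\varphi\psi$ again has order dividing $|G|$, hence coprime to $d$, its $F$-lifting is unique by Lemma~\ref{basic-liftings}~$(i)$, forcing $\tvarphi\tpsi=s(\varphi\psi)$. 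Therefore $s$ is a homomorphism, and I would set $\tG:=s(G)$, a subgroup of $\GL(V)$ with $\pi|_{\tG}$ inverse to $s$, so that $\pi|_{\tG}\colon\tG\to G$ is an isomorphism and $\tvarphi^*(F)=F$ for all $\tvarphi\in\tG$; that is, $\tG$ is an $F$-lifting of $G$.

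For uniqueness, suppose $\tG'\subseteq\GL(V)$ is any $F$-lifting of $G$. For each $\varphi\in G$ there is exactly one element of $\tG'$ over $\varphi$, and by definition of $F$-lifting it fixes $F$, so it is an $F$-lifting of $\varphi$; by the uniqueness just used it must equal $s(\varphi)$. Hence $\tG'=s(G)=\tG$, which proves uniqueness. I do not expect a genuine obstacle here: the only point requiring care is verifying that the product of two unique $F$-liftings is \emph{itself} the unique $F$-lifting of the product, which is exactly where the hypothesis $\gcd(d,|G|)=1$ enters (guaranteeing every order arising in $G$ is coprime to $d$), together with the closure computation above; the coprimality $\gcd(d,n+2)=1$ is needed only to secure existence via Proposition~\ref{proposition:char of liftability}.
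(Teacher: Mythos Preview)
Your section approach is natural, but there is a genuine gap at the multiplicativity step. You assert that $\tvarphi\tpsi$ is an $F$-lifting of $\varphi\psi$, yet by Definition~\ref{definition:liftability} an $F$-lifting must have the \emph{same order} as the element it lifts, and you only check that $\tvarphi\tpsi$ projects to $\varphi\psi$ and fixes $F$. These two conditions alone do not pin down a unique element: for every $d$-th root of unity $\mu$, the element $\mu\,s(\varphi\psi)$ also projects to $\varphi\psi$ and satisfies $(\mu\,s(\varphi\psi))^*(F)=\mu^dF=F$, so there are $d$ such elements, of which only the one with $\mu=1$ has the correct order. Lemma~\ref{basic-liftings}~$(i)$ asserts uniqueness only among genuine $F$-liftings, so invoking it here is circular until the order of $\tvarphi\tpsi$ is known.

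Showing $\mu=1$ is precisely the content the paper supplies via the determinant. Writing $\tvarphi\tpsi=\mu\,s(\varphi\psi)$ and taking determinants, each of $\det\tvarphi$, $\det\tpsi$, $\det s(\varphi\psi)$ is a $|G|$-th root of unity (their eigenvalues are, by Lemma~\ref{diagonalization}), so comparing both sides yields $\mu^{(n+2)|G|}=1$; together with $\mu^d=1$ and $\gcd\!\big(d,(n+2)|G|\big)=1$ this forces $\mu=1$. The paper packages the same computation by generating a group $\tG$ from all the $s(\varphi)$ and showing $\ker(\pi|_{\tG})$ is trivial through the determinant map. In particular, your closing remark that $\gcd(d,n+2)=1$ is used ``only to secure existence'' is exactly where the intuition goes astray: that hypothesis is needed a second time, at this very point, to pass from $\mu^{n+2}=1$ and $\mu^d=1$ to $\mu=1$.
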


\begin{proof}
  By Proposition~\ref{proposition:char of liftability}, every element $\varphi$ in $G$ admits an $F$-lifting $\tvarphi$, Morever, since $d$ and $|G|$ are relatively prime, the same follows for $d$ and the order of $\varphi$ and so by Lemma~\ref{basic-liftings}~$(i)$ the $F$-lifting $\tvarphi$ of $\varphi$ is unique. Let $\tG$ be the group generated by $\widetilde{H}$, where
  $$\widetilde{H}=\{\tvarphi\in \GL(V)\mid \tvarphi  \mbox{ the unique $F$-lifting of $\varphi\in G$ to }\GL(V)\}\,.$$
  
  The restriction of the canonical projection $\pi\colon\GL(V) \to \PGL(V)$ to $\widetilde{G}$ gives a surjective morphism $\pi|_{\tG}:\tG \to G$ and $\tvarphi^*(F)=F$ for every $\tvarphi\in \tG$. Let $\tvarphi$ be in the kernel of $\pi|_{\tG}$, then $\tvarphi=\lambda\Id$ and since $\tvarphi^*(F)=F$ we obtain $\lambda^d=1$. In particular, the determinant $\det(\tvarphi)=\lambda^{n+2}$ is a $d$-root of unity by Lemma~\ref{diagonalization}.
  
  Let now $\tvarphi=\tvarphi_1\circ\tvarphi_2\circ\ldots\circ\tvarphi_\ell$ where each $\tvarphi_i\in \widetilde{H}$. Taking determinant we obtain $$\det(\tvarphi)=\det(\tvarphi_1)\cdot\det(\tvarphi_2)\cdots\det(\tvarphi_\ell)$$ Again by Lemma~\ref{diagonalization} we have that $\det(\tvarphi_i)$ is a $q_i$-root of unity, where $q_i$ is the order of $\tvarphi_i$. In particular, $\det(\tvarphi)$ is a $q_1\cdot q_2\cdots q_\ell$-root of unity. Since $d$ and   $q_1\cdot q_2\cdots q_\ell$ are relatively prime, we obtain that $\det(\tvarphi)=1$. Finally, $\lambda^{n+2}=\det(\tvarphi)=1$ implies that $\lambda=1$ since $d$ and $n+2$ are relatively prime. This yields that $\pi|_{\tG}$ is an isomorphism.

  In particular, we proved that $\tG=\widetilde{H}$ and so the uniqueness statement follows from the uniqueness of the $F$-lifting of every element in $G$.
\end{proof}

\begin{proposition} \label{most-p-SL} %
  Let $X$ be a smooth hypersurface of dimension $n\geq 1$ and degree $d\geq 3$ in $\PP(V)$ with with $\gcd(d,n+2)=1$ and let $G_p$ be a Sylow $p$-subgroup of $\Aut(X)$ with $p$ a prime number. Then the following hold.
  \begin{enumerate}[$(i)$]
  \item If $p$ does not divide $d(d-1)$, then there exists a unique $F$-lifting of $G_p$ and this $F$-lifting is contained in $\SL(V)$.
  \item If $p$ divides $d-1$ then there exists a unique $F$-lifting of $G_p$.
  \item If $p$ divides $d$, then there exists a unique $F$-lifting of $G_p$ that is contained in $\SL(V)$.
  \end{enumerate}
\end{proposition}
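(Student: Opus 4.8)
The plan is to handle the three cases separately, reducing each to one of the group-level lifting lemmas already proved and then controlling determinants by means of Lemma~\ref{determinant-3k}. Throughout, I will use that a Sylow $p$-subgroup $G_p$ is by definition a $p$-group, so that its order is a power of $p$, say $|G_p|=p^m$.

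I would begin with case $(iii)$, which is the most immediate. Since $\gcd(d,n+2)=1$ and $p$ divides $d$, the $p$-group $G_p$ satisfies exactly the hypotheses of Lemma~\ref{lemma:3-group}, which directly produces a unique $F$-lifting contained in $\SL(V)$. Nothing further is required in this case.

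For cases $(i)$ and $(ii)$, the key observation is that $p$ does not divide $d$: in case $(ii)$ this follows from $\gcd(d,d-1)=1$, and in case $(i)$ it holds by hypothesis. Consequently the order $p^m$ of $G_p$ is relatively prime to $d$, so Lemma~\ref{lemma:other-p} applies to $G=G_p$ and yields a unique $F$-lifting $\tG$. This settles the existence and uniqueness assertions of both $(i)$ and $(ii)$ simultaneously. It then remains only to establish the $\SL(V)$ containment in case $(i)$, where $p$ divides neither $d$ nor $d-1$. Here I would invoke the identification $\tG=\widetilde{H}$ obtained in the proof of Lemma~\ref{lemma:other-p}: every element of $\tG$ is the unique $F$-lifting of some $\varphi\in G_p$, and each such $\varphi$ has order a power of $p$. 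Applying the final assertion of Lemma~\ref{determinant-3k} (the case $p\nmid d$, so that $s=0$) to each of these liftings shows that its determinant equals one, whence every element of $\tG$ lies in $\SL(V)$ and therefore $\tG\subseteq\SL(V)$.

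The argument is essentially an assembly of the preceding lemmas, and I do not expect a genuine obstacle. The only point demanding care is that the determinant conclusion of Lemma~\ref{determinant-3k} must be applied element-by-element; this is legitimate precisely because Lemma~\ref{lemma:other-p} identifies $\tG$ with the set $\widetilde{H}$ of elementwise $F$-liftings. Without that identification one could not rule out the possibility that some product of liftings has nontrivial determinant, and so it is this structural fact, rather than any new computation, that carries the $\SL(V)$ statement.
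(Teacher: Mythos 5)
Your proof is correct and follows essentially the same route as the paper: Lemma~\ref{lemma:3-group} for case $(iii)$, Lemma~\ref{lemma:other-p} (via $p\nmid d$) for existence and uniqueness in cases $(i)$ and $(ii)$, and the elementwise application of Lemma~\ref{determinant-3k} for the $\SL(V)$ containment in case $(i)$. Your extra care about why Lemma~\ref{determinant-3k} may be applied element-by-element is sound, though slightly more than needed: since $\pi|_{\tG}$ is a group isomorphism fixing $F$, every element of any $F$-lifting $\tG$ automatically has the same order as its image and is therefore itself an $F$-lifting, so the determinant argument goes through even without invoking $\tG=\widetilde{H}$.
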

\begin{proof}
  Lemma~\ref{lemma:other-p} imply the existence of a unique $F$-lifting $\tG_p$ to $\GL(V)$ of $G_p$ in cases $(i)$ and $(ii)$. Lemma~\ref{determinant-3k} imply that in case $(i)$ the $F$-lifting $\tG_p$ is contained in $\SL(V)$. Finally, part $(iii)$ follows directly from Lemma~\ref{lemma:3-group}.
\end{proof}

The following is our main result in this paper. Our original idea for the proof was inspired by \cite[Theorem 4.11]{WY19}.

\begin{theorem} \label{all-liftable} Let $n\geq 1$ and $d\geq 3$ with $(n,d)\neq (1,3), (2,4)$. Then the automorphism group of every smooth hypersurface of dimension $n$ and degree $d$ in $\PP(V)$ is $F$-liftable if and only if $d$ and $n+2$ are relatively prime.
\end{theorem}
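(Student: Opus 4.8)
The plan is to dispatch the two directions separately, the forward implication being the substantial one; throughout we keep the standing hypotheses $n\geq 1$, $d\geq 3$ and $(n,d)\neq(1,3),(2,4)$, so that $\Aut(X)$ is finite by Theorem~\ref{matsumura-monsk}.

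For the ``only if'' direction I would argue by contrapositive. If $\gcd(d,n+2)>1$, choose a prime $p\mid\gcd(d,n+2)$ and invoke Example~\ref{Klein-non-liftable}, which exhibits an automorphism $\varphi$ of order $p$ of the Klein hypersurface $X_K$ that is not $F$-liftable. Any subgroup of an $F$-liftable group is again $F$-liftable: if $\tG$ is an $F$-lifting of a group and $H$ is a subgroup, then $(\pi|_{\tG})^{-1}(H)$ maps isomorphically onto $H$ and still fixes $F$. Hence the non-$F$-liftability of $\langle\varphi\rangle\subseteq\Aut(X_K)$ forces $\Aut(X_K)$ itself to be non-$F$-liftable, so the group is not $F$-liftable for every such hypersurface.

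For the ``if'' direction, assume $\gcd(d,n+2)=1$ and put $G=\Aut(X)$. I would recast $F$-liftability as the splitting of a central extension. Let $\mu_d\subset\CC^\times$ be the group of $d$-th roots of unity and set $\widetilde{G}_{\max}=\{\tvarphi\in\GL(V)\mid\tvarphi^*(F)=F\}$, a finite subgroup of $\GL(V)$. The kernel of $\pi|_{\widetilde{G}_{\max}}$ consists of the $\lambda\Id$ with $\lambda^d=1$, i.e. is exactly $\mu_d$, and $\pi|_{\widetilde{G}_{\max}}$ is onto $G$ by Proposition~\ref{proposition:char of liftability}. We thus have a central extension
$$1\longrightarrow\mu_d\longrightarrow\widetilde{G}_{\max}\xrightarrow{\ \pi\ }G\longrightarrow1,$$
and, by definition, $G$ is $F$-liftable precisely when this extension admits a section, i.e. when its class $\alpha\in H^2(G,\mu_d)$ (trivial $G$-action) vanishes. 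The key step is a reduction to Sylow subgroups: it is a standard fact of finite group cohomology that $\alpha$ vanishes if and only if its restriction $\mathrm{res}^G_{G_p}(\alpha)\in H^2(G_p,\mu_d)$ to a Sylow $p$-subgroup $G_p$ vanishes for every prime $p$ dividing $|G|$, the restriction being injective on $p$-primary components. Now $\mathrm{res}^G_{G_p}(\alpha)$ is the class of the pulled-back extension $1\to\mu_d\to\pi^{-1}(G_p)\cap\widetilde{G}_{\max}\to G_p\to1$, whose sections are exactly the $F$-liftings of $G_p$. Proposition~\ref{most-p-SL} furnishes such an $F$-lifting for every prime $p$, via its three cases $p\nmid d(d-1)$, $p\mid d-1$ and $p\mid d$; hence each restricted class vanishes, so $\alpha=0$ and $G$ is $F$-liftable.

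The main obstacle is exactly this globalization step. Proposition~\ref{most-p-SL} only lifts one Sylow subgroup at a time, and a priori the local $F$-liftings need not assemble into a single subgroup of $\GL(V)$ mapping isomorphically onto all of $G$. The naive $\SL(V)$-trick—normalizing determinants to $1$ so that $\pi$ becomes injective on the $F$-preserving special linear elements—fails here, because for primes $p\mid d-1$ the determinant of an $F$-lifting is not controlled (Lemma~\ref{determinant-3k} explicitly assumes $p\nmid d-1$), so no uniform $\SL(V)$-lifting of the whole group is available. It is precisely the restriction theorem in group cohomology that forces the Sylow-local $F$-liftings provided by Proposition~\ref{most-p-SL} to glue into a global $F$-lifting, using only that each Sylow subgroup is $F$-liftable.
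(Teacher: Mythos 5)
Your proposal is correct, and while the ``only if'' direction coincides with the paper's (the Klein hypersurface of Example~\ref{Klein-non-liftable} plus the observation that subgroups of $F$-liftable groups are $F$-liftable), your ``if'' direction takes a genuinely different route. The paper proceeds constructively: it takes the unique Sylow $F$-liftings from Proposition~\ref{most-p-SL} --- normalized inside $\SL(V)$ for the primes $p_i\nmid d-1$, and with determinants that are roots of unity of order dividing the Sylow order (hence prime to $d$) for the primes $q_j\mid d-1$ --- lets $\tG$ be the subgroup of $\GL(V)$ they generate, and shows $\pi|_{\tG}$ is injective by a determinant computation: a kernel element $\lambda\Id$ satisfies $\lambda^d=1$, while writing it as a product of Sylow generators forces $\det(\lambda\Id)=\lambda^{n+2}$ to be a root of unity of order prime to $d$, whence $\lambda^{n+2}=1$ and then $\lambda=1$ by $\gcd(d,n+2)=1$. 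You instead package $F$-liftability as the splitting of the central extension $1\to\mu_d\to\widetilde{G}_{\max}\to G\to 1$ and invoke Sylow detection (restriction--corestriction) in $H^2(G,\mu_d)$; this is a valid and standard argument, and it is leaner in its inputs --- you only need the \emph{existence} of an $F$-lifting of each Sylow subgroup, not the uniqueness statements nor the $\SL(V)$ containments of Proposition~\ref{most-p-SL}, and you get the mixed-determinant bookkeeping for free. What you lose is explicitness (the cohomological argument does not exhibit the lifting as a concrete subgroup generated by the Sylow liftings) and the ``low-tech'' character the authors deliberately pursue: the paper even advertises, around Lemma~\ref{lemma:other-p}, that it replaces the cohomological machinery of \cite[Theorem~4.8]{OY19} by determinant maps, so your proof reintroduces exactly the kind of tool they chose to avoid. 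One small inaccuracy in your framing: the ``naive $\SL(V)$-trick'' you dismiss is not what the paper attempts --- it never tries to place the liftings for $p\mid d-1$ inside $\SL(V)$; it only needs their determinants to have order prime to $d$, which follows from Lemma~\ref{diagonalization}, so the paper's gluing succeeds without your cohomological substitute.
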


\begin{proof}
  As in the case of Proposition~\ref{proposition:char of liftability}, the ``only if'' part follow directly from Example~\ref{Klein-non-liftable} that provides automorphisms that are not $F$-liftable of every smooth Klein hypersurface of dimension $n$ and degree $d$ whenever $d$ and $n+2$ are not relatively prime.
  
  To prove the ``if'' part, let $X\subseteq \PP(V)$ be a smooth hypersurface of dimension $n$ and degree $d$ given by the homogeneous form $F\in S^d(V^*)$.  Assume that $d$ and $n+2$ are relatively prime. By Theorem~\ref{matsumura-monsk}, we have $\Aut(X)=\Lin(X)$ and is finite. Let us write
  $$|\Aut(X)|=p_1^{k_1}\cdots p_r^{k_r}q_1^{l_i}\cdots q_s^{l_s}\,,$$
  where $p_1,\ldots,p_r,q_1\ldots q_s$  are pairwise distinct primes,  $k_1,\ldots,k_r,l_1,\ldots,l_s\geq 1$ are integers and $q_1,\ldots q_s$ are all the prime numbers dividing $d-1$. By Proposition~\ref{most-p-SL}~$(i)$ and $(iii)$, every $p_i$-Sylow subgroup $G_{p_i}$ of $\Aut(X)$ admits an unique $F$-lifting $\tG_{p_i}\subset \SL(V)$, for all $i=1,\ldots,r$. By Proposition~\ref{most-p-SL}~$(ii)$, every $q_i$-Sylow subgroup $G_{q_i}$ of $\Aut(X)$ admits an unique $F$-lifting $\tG_{q_i}\subset \GL(V)$, for all $i=1,\ldots,s$.  We will check that the group $\tG$ generated by 
  $$\tG_{p_1}\cup\ldots \cup \tG_{p_r}\cup \tG_{q_1}\cup\ldots \cup \tG_{q_s}$$
  is a lifting of $\Aut(X)$. The restriction of the canonical projection $\pi\colon\GL(V) \to \PGL(V)$ to $\widetilde{G}$ gives a surjective morphism $\pi|_{\tG}:\tG \to \Aut(X)$.  If $\tvarphi$ is in the kernel of $\pi|_{\tG}$, then $\tvarphi=\lambda \Id$, for some $\lambda \in \CC^\times$. Furthermore, since $\tvarphi^*(F)=F$ we obtain that $\lambda^d=1$ and so $\lambda$ is a $d$-root of unity. In particular, $\det(\tvarphi)=\lambda^{n+2}$ is also a $d$-root of unity.

  On the other hand, $\tvarphi=\tpsi_1\cdots\tpsi_\ell$ where each $\tpsi_i$ belongs to some $\tG_p$, with $p\in\{p_1,\ldots,p_r,q_1,\ldots,q_s\}$. Furthermore, if $\tpsi_i\in \tG_{p_j}$ then $\det(\tpsi_i)=1$ since in this case $\tpsi_i\in \SL(V)$. Furthermore, if $\tpsi_i\in \tG_{q_j}$ then the order of $\tpsi_i$ divides $q_j^{l_j}$ and so $\det(\tpsi_i)$ is a $q_j^{l_j}$-root of unity by Lemma~\ref{diagonalization}. In particular, we have shown that $\det(\psi_i)$ is a $q_1^{l_i}\cdots q_s^{l_s}$-root of unity for every $i=1,\ldots, \ell$. Hence,
  $$\det(\tvarphi)=\det(\psi_1)\cdots\det(\psi_\ell)\mbox{ is a $q_1^{l_i}\cdots q_s^{l_s}$-root of unity}\,.$$
  We have shown that $\det(\tvarphi)=\lambda^{n+2}$ is a $q_1^{l_i}\cdots q_s^{l_s}$-root of unity and a $d$-root of unity. Since $q_j$ are the prime numbers dividing $d-1$, they do not divide $d$ and so $\gcd(d,q_1^{l_i}\cdots q_s^{l_s})=1$. Hence $\lambda^{n+2}=1$. This last equality gives that $\lambda$ is a $(n+2)$-root of unity and we showed before that $\lambda$ is also a $d$-root of unity. Hence $\lambda=1$ since $\gcd(d,n+2)=1$. This yields that $\pi|_{\tG}$ is also injective, concluding the proof.
\end{proof}

\section{A bound for the order of certain $p$-groups acting on smooth hypersurfaces}
\label{sec:applications}

As an application of our above results we show the following proposition giving a sufficient condition for the order of $\Aut(X)$ to not be divisible by $p^2$, where $X$ is a smooth hypersurfaces of dimension $n$ and degree $d$. If $p$ does not divide $d(d-1)$, we define $\ell(p^r)$ as the smallest positive integer 
such that $(1-d)^{\ell(p^r)}\equiv 1 \mod p^r$. Such an $\ell(p^r)$ always exists since $d-1$ and $p$ are relatively prime. 

\begin{proposition} \label{sylow-small}
Let $X$ be a smooth hypersurface of dimension $n\geq 1$ and degree $d\geq 3$ in $\PP(V)$ with with $\gcd(d,n+2)=1$ and let $G_p$ be a Sylow $p$-subgroup of $\Aut(X)$ with $p$ a prime number not dividing $d(d-1)$. Assume that $G_p$ is $F$-liftable. If $\ell(p^2)> n+2$ and $2\ell(p)>n+2$, then $G_p$ is trivial or isomorphic to $\ZZ/p\ZZ$.
\end{proposition}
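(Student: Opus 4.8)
The plan is to prove that $|G_p|\le p$; since a group of order at most $p$ is trivial or cyclic of order $p$, this suffices. By Proposition~\ref{most-p-SL}~$(i)$ the group $G_p$ has a (unique) $F$-lifting $\tG_p\subset\SL(V)$. First I would record that $G_p$ has exponent $p$: if some $\varphi\in G_p$ had order $p^r$ with $r\ge 2$, then $\varphi^{p^{r-2}}$ would be an $F$-liftable automorphism of order $p^2$, since its lifting $\tvarphi^{p^{r-2}}\in\tG_p$ has order $p^2$ in $\GL(V)$ and satisfies $(\tvarphi^{p^{r-2}})^*(F)=F$. By the ``only if'' part of Theorem~\ref{power-prime-order}~$(iii)$ (here $p\nmid d(d-1)$) this would force some $\ell\in\{1,\dots,n+2\}$ with $(1-d)^\ell\equiv 1 \bmod p^2$, i.e. $\ell(p^2)\le n+2$, contradicting the hypothesis. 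Now I argue by contradiction: assuming $|G_p|\ge p^2$, the $p$-group $G_p$ contains a subgroup of order $p^2$, which, having exponent $p$, must be isomorphic to $H\cong\ZZ/p\ZZ\times\ZZ/p\ZZ$.

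Next I would analyse the $F$-lifting $\widetilde H:=(\pi|_{\tG_p})^{-1}(H)\subset\tG_p$, an abelian group isomorphic to $(\ZZ/p\ZZ)^2$. By Lemma~\ref{diagonalization} each element of $\widetilde H$ is diagonalizable, and being abelian the whole group is simultaneously diagonalizable. Fixing a primitive $p$-th root of unity and two generators of $\widetilde H$, I obtain a basis $x_0,\dots,x_{n+1}$ of $V^*$ of common eigenvectors and assign to each $x_i$ a ``weight'' $\omega_i\in(\ZZ/p\ZZ)^2=\mathbb{F}_p^2$; let $W=\{\omega_0,\dots,\omega_{n+1}\}$ be the set of weights that occur. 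I would then establish three properties. \emph{(a) $W$ is contained in no affine line.} Since $\pi|_{\widetilde H}\colon\widetilde H\to\PGL(V)$ is injective, no non-trivial element of $\widetilde H$ is scalar; as $\widetilde\varphi^{\,i}\widetilde\psi^{\,j}$ is scalar precisely when the functional $(a,b)\mapsto ia+jb$ is constant on $W$, this says exactly that no non-zero linear functional is constant on $W$. \emph{(b) $W$ is stable under multiplication by $1-d$.} For each $i$ with $\omega_i\neq 0$, smoothness gives $\deg_{x_i}(F)=d-1$ via Remark~\ref{simple-original} (the pure power $x_i^d$ cannot occur in the $\widetilde H$-invariant $F$ because $p\nmid d$ makes its weight $d\,\omega_i\neq 0$); hence $F$ contains a monomial $x_i^{d-1}x_j$, whose vanishing weight forces $\omega_j=(1-d)\omega_i$, so $(1-d)\omega_i\in W$. \emph{(c) $|W|<2\ell(p)$.} The $\widetilde H$-eigenspace decomposition gives $n+2=\sum_{\omega\in W}\dim V(\omega)\ge |W|$, so $|W|\le n+2<2\ell(p)$ by hypothesis.

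Finally I would derive the contradiction from (a)--(c). Since $1-d$ has multiplicative order $\ell(p)$ in $\mathbb{F}_p^\times$, property (b) makes $W\setminus\{0\}$ a disjoint union of $\langle 1-d\rangle$-orbits, each of cardinality exactly $\ell(p)$; thus $|W\setminus\{0\}|$ is a multiple of $\ell(p)$, and by (c) that multiple is $0$ or $1$. If it is $0$, then $|W|\le 1$ and $W$ trivially lies on a line. If it is $1$, then $W\setminus\{0\}=\{(1-d)^{i}c\}_i$ consists of scalar multiples of a single $c$, so $W$ lies, together with $0$, on the line $\mathbb{F}_p\,c$ through the origin. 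Either way $W$ is contained in an affine line, contradicting (a). Hence $|G_p|\le p$.

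The step I expect to be the main obstacle is property (b): it requires running the monomial/``successor'' argument of Theorem~\ref{power-prime-order} with the two-dimensional weights coming from the $(\ZZ/p\ZZ)^2$-action rather than with a single signature, and one must invoke $p\nmid d$ to exclude the pure powers $x_i^{d}$. Once (a)--(c) are in place, the combinatorial endgame is short, and it is precisely there that the hypothesis $2\ell(p)>n+2$ is used.
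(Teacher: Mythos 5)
Your proof is correct, and while its first half coincides with the paper's, the way you derive the final contradiction is genuinely different. Both arguments begin identically: an element of order $p^2$ in the $F$-liftable group $G_p$ would be an $F$-liftable automorphism of order $p^2$, which the ``only if'' direction of Theorem~\ref{power-prime-order}~$(iii)$ forbids since $\ell(p^2)>n+2$; hence any subgroup of order $p^2$ is isomorphic to $\ZZ/p\ZZ\times\ZZ/p\ZZ$, and one simultaneously diagonalizes its $F$-lifting. From that point the paper computes the two signatures explicitly: it invokes Lemma~\ref{simple} and Lemma~\ref{equal-dim} to force $\sigma=\big(1,(1-d),\ldots,(1-d)^{\ell(p)-1},0,\ldots,0\big)$ with all multiplicities equal to one, and then shows that the second signature either exhibits $\varphi'$ as a power of $\varphi$ (contradicting the rank-two structure) or requires a second disjoint block of $\ell(p)$ nonzero entries, impossible because $2\ell(p)>n+2$. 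You instead encode the whole configuration in the joint weight set $W\subset\mathbb{F}_p^2$ and three facts: faithfulness modulo scalars is exactly the statement that $W$ lies on no affine line; smoothness, invariance and $p\nmid d$ make $W$ stable under multiplication by $1-d$ (your step (b) is sound: weight reasons exclude $x_i^d$, so Remark~\ref{simple-original} forces a monomial $x_i^{d-1}x_j$, whose weight-zero condition gives $(1-d)\omega_i\in W$); and $|W|\le n+2<2\ell(p)$ while every nonzero orbit of multiplication by $1-d$ has size exactly $\ell(p)$, so $W$ is confined to a single line through the origin, a contradiction. Your route buys economy and robustness: it bypasses Lemma~\ref{equal-dim} entirely (you need only that each occurring weight space is nonzero, not that conjugate weight spaces have equal dimensions), it eliminates the case analysis on $\sigma'$, and the same orbit-counting argument would immediately bound the rank of any elementary abelian $p$-subgroup. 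The paper's route buys explicit normal forms for the signatures, in the spirit of the constructions appearing elsewhere in the text (e.g.\ in the proof of Theorem~\ref{power-prime-order}), at the cost of a longer computation.
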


\begin{proof}
  Assume by contradiction that the automorphism group $\Aut(X)$ of a smooth hypersurface $X$ of dimension $n$ and degree $d$ given by the homogeneous form $F\in S^d(V^*)$ has a Sylow $p$-subgroup $G_p$ of order $p^r$ with $r>1$. This, in particular, yields that $\Aut(X)$ has a subgroup $H$ of order $p^2$ (see e.g. \cite[Ch. 2, Theorem 1.9]{Suz82}). A standard fact about $p$-groups is that, up to isomorphism, $\ZZ/p^2\ZZ$ and $\ZZ/p\ZZ\times \ZZ/p\ZZ$ are the only two groups of order $p^2$ (see e.g. \cite[Example of Ch. 1, \S 3]{Suz82}). By Theorem~\ref{power-prime-order}, the condition $\ell(p^2)>n+2$ implies that $H\not\cong \ZZ/p^2\ZZ$. Hence we conclude that $H\cong \ZZ/p\ZZ\times \ZZ/p\ZZ$.

The group $H$ is $F$-liftable by Proposition~\ref{most-p-SL}~$(i)$. Let $\tvarphi$ and $\tvarphi'$ be $F$-liftings of the generator $\varphi$ and $\varphi'$ of each $\ZZ/p\ZZ$ factor inside $H$, respectively. Since $\tvarphi$ and $\tvarphi'$ commute, there exists a basis $\beta$ of $V$ such that the matrices of both automorphisms are diagonal (see e.g. \cite[Ch. VII, \S 5, $n^{\circ}$ 7, Prop. 13]{Bou81}). 

Fix a primitive $p$-root of unity $\xi$ and let $\sigma$ and $\sigma'$ be the signatures of $\tvarphi$ and $\tvarphi'$, respectively. Without loss of generality, we may assume that the first $\ell(p)$ coordinates of $\sigma$ are $1,(1-d),\ldots,(1-d)^{\ell(p)-1}$. This yields that $x_i^d$ does not appear in $F$ with non-zero coefficient for $i<\ell(p)$. Hence, Lemma~\ref{simple} implies that $x^{d-1}_ix_{i+1}$ and $x^{d-1}_{\ell(p)-1}x_{0}$ must appear in $F$ with non-zero coefficient for $i<\ell(p)$.

Now, Lemma~\ref{equal-dim} yields $\dim V((1-d)^i)=c$ is constant for all $i<\ell(p)$, where $V(a)$ is the dimension of the eigenspace of $V$ associated to the eigenvalue $\xi^a$ of $\varphi$. And, since $2\ell(p)>n+2=\dim V$ we conclude that $c=1$. Furthermore, a similar argument shows that the dimension of every eigenspace $V(a)$ different from $V((1-d)^i)$ for $i<\ell(p)$ must be zero except for $V(0)$. This yields that 
$$\sigma=\big(\underbrace{1,(1-d),(1-d)^{2},\ldots,(1-d)^{\ell(p)-1}}_{\ell(p)\mbox{ times}},\underbrace{0,\ldots,0}_{n+2-\ell(p)\mbox{ times}}  \big)\,.$$ 

Assume now that $\sigma'_i$ the $i$-th coordinate of $\sigma'$ is $a\not\equiv 0\mod p$ with $i<\ell(p)$.  Then a similar argument as above shows that

\begin{align*}
\sigma'&=\big(\underbrace{a(1-d)^{\ell(p)-i},\ldots,a(1-d)^{\ell(p)-1}}_{\ell(p)-i\mbox{ times}},\underbrace{a,a(1-d),\ldots,a(1-d)^{\ell(p)-i-1}}_{\ell(p)-i\mbox{ times}},\underbrace{0,\ldots,0}_{n+2-\ell(p)\mbox{ times}}  \big) \\
  & =\big(\underbrace{b,b(1-d),b(1-d)^{2},\ldots,b(1-d)^{\ell(p)-1}}_{\ell(p)\mbox{ times}},\underbrace{0,\ldots,0}_{n+2-\ell(p)\mbox{ times}}  \big)\,,
\end{align*}
where $b=a(1-d)^{\ell(p)-i}$. This yields a contradiction, since in this case, $\varphi'=\varphi^b$ and so they belong to the same cyclic group. Hence, $\sigma'_i=0$ for all $i<\ell(p)$.

  We have proven that the signature $\sigma'$ must have the form
  $$\sigma'=\big(\underbrace{0,\ldots,0}_{\ell(p)\mbox{ times}},\underbrace{1,(1-d),(1-d)^{2},\ldots,(1-d)^{\ell(p)-1}}_{\ell(p)\mbox{ times}},\underbrace{*,\ldots,*}_{n+2-2\ell(p)\mbox{ times}}  \big)\,.$$ 
But this is impossible since $2\ell(p)>n+2$. This contradiction implies that $G_p$ has order at most $p$ proving the proposition.
\end{proof}

We have the following corollary that is the announced result for this section.

\begin{corollary} \label{sylow-small-order}
Let $X$ be a smooth hypersurface of dimension $n\geq 1$ and degree $d\geq 3$ in $\PP(V)$ with $\gcd(d,n+2)=1$. Let $p$ be a prime number not dividing $d(d-1)$. If $\ell(p^2)> n+2$ and $2\ell(p)>n+2$, then $p^2$ does not divide the order of $\Aut(X)$.
\end{corollary}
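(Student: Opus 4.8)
The plan is to reduce the statement immediately to Proposition~\ref{sylow-small}, whose hypotheses are nearly identical. First I would fix a Sylow $p$-subgroup $G_p$ of $\Aut(X)$, which is legitimate because $\Aut(X)$ is finite by Theorem~\ref{matsumura-monsk}. By the defining property of a Sylow $p$-subgroup, the exact power of $p$ dividing $|\Aut(X)|$ is precisely $|G_p|$, so it suffices to show $|G_p|\le p$.

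The only discrepancy between the present hypotheses and those of Proposition~\ref{sylow-small} is that the proposition additionally assumes $G_p$ to be $F$-liftable, so I would dispose of that first. Since $p$ does not divide $d(d-1)$ and $\gcd(d,n+2)=1$, Proposition~\ref{most-p-SL}~$(i)$ applies verbatim and guarantees that $G_p$ admits a (unique) $F$-lifting, which moreover lies in $\SL(V)$. In particular $G_p$ is $F$-liftable, so this extra hypothesis is automatic in our setting and is not a genuine restriction.

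With $F$-liftability established, the remaining numerical conditions $\ell(p^2)>n+2$ and $2\ell(p)>n+2$ are exactly those demanded by Proposition~\ref{sylow-small}, which then yields that $G_p$ is either trivial or isomorphic to $\ZZ/p\ZZ$. In either case $|G_p|\le p$, and by the first paragraph the largest power of $p$ dividing $|\Aut(X)|$ is therefore at most $p$. Hence $p^2$ does not divide $|\Aut(X)|$, as claimed.

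I expect no real obstacle here: all of the combinatorial and geometric content (the signature analysis, together with Lemmas~\ref{simple} and \ref{equal-dim}) is already carried out inside Proposition~\ref{sylow-small}. The one point worth flagging explicitly is that the $F$-liftability assumption appearing in that proposition is not an added constraint but a free consequence of Proposition~\ref{most-p-SL}~$(i)$ under the corollary's hypotheses; once this is observed, the corollary is a one-line deduction from the proposition combined with the definition of a Sylow subgroup.
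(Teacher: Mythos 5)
Your proof is correct and follows essentially the same reduction as the paper: establish $F$-liftability of a Sylow $p$-subgroup and then apply Proposition~\ref{sylow-small}. The only (harmless) difference is that the paper deduces liftability from Theorem~\ref{all-liftable} (the full group $\Aut(X)$ is $F$-liftable, hence so is any Sylow subgroup), whereas you invoke Proposition~\ref{most-p-SL}~$(i)$ directly, which is slightly more economical since it applies verbatim to $G_p$ and avoids the implicit step that subgroups inherit $F$-liftability.
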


\begin{proof}
By Theorem~\ref{all-liftable} we have that $\Aut(X)$ is $F$-liftable and so, in particular a $p$-Sylow subgroup of $\Aut(X)$ is also $F$-liftable. The corollary now follows directly from Proposition~\ref{sylow-small}.
\end{proof}

\begin{example} \label{orders-full-cubics}
Corollary~\ref{sylow-small-order} is particularly useful for small values of $n$. For instance, when $X$ is a smooth cubic hypersurface of dimension $n$ and $3$ does not divide $n+2$, we obtain the following bounds by directly applying Corollary \ref{sylow-small-order}.
\begin{itemize}
    \item[($n=2$)] The order of $\Aut(X)$ is  $2^{r_2}3^{r_3}5^{r_5}$, where $r_5\leq 1$.
    \item[($n=3$)]  The order of $\Aut(X)$ is  $2^{r_2}3^{r_3}5^{r_5}11^{r_{11}}$, where $r_5,r_{11}\leq 1$.
    \item[($n=4$)] The order of $\Aut(X)$ is  $2^{r_2}3^{r_3}5^{r_5}7^{r_7}11^{r_{11}}$, where $r_5,r_7,r_{11}\leq 1$. This case does not follow directly from Corollary~\ref{sylow-small-order}. Nevertheless, the bounds for $r_5,r_7,r_{11}$ follow from the corresponding bounds in the case $n=5$.
    \item[($n=5$)] The order of $\Aut(X)$ is  $2^{r_2}3^{r_3}5^{r_5}7^{r_7}11^{r_{11}}43^{r_{43}}$, where $r_5,r_7,r_{11},r_{43}\leq 1$.
    \end{itemize}
\end{example}

\begin{example}
  In the case of a smooth quintic threefold $X$, it follows from \cite[Theorem 4.8]{OY19} that the $p$-Sylow subgroups of $\Aut(X)$ are $F$-liftable for $p\neq 5$. Proposition~\ref{sylow-small} yields that the order of $\Aut(X)$ is  $2^{r_2}3^{r_3}5^{r_5}13^{r_{13}}17^{r_{17}}41^{r_{41}}$, where $r_{13},r_{17},r_{41}\leq 1$. This retrieves part of \cite[Theorems~5.8, 5.9 and 5.10]{OY19} in a uniform way.
\end{example}

\bibliographystyle{alpha}
\bibliography{nfolds}
\end{document}